%------------------------------------------------------------------------------
% Beginning of journal.tex
%------------------------------------------------------------------------------
%
% AMS-LaTeX version 2 sample file for journals, based on amsart.cls.
%
%        ***     DO NOT USE THIS FILE AS A STARTER.      ***
%        ***  USE THE JOURNAL-SPECIFIC *.TEMPLATE FILE.  ***
%
% Replace amsart by the documentclass for the target journal, e.g., tran-l.
%
\documentclass[10pt]{amsart}

\usepackage{tikz}
\usepackage{graphicx}
\usepackage{tikz-cd}
\usepackage[shortlabels]{enumitem}

\usetikzlibrary{decorations.pathmorphing}
\tikzset{snake it/.style={decorate, decoration=snake}}

\newtheorem{theorem}{Theorem}[section]
\newtheorem{lemma}[theorem]{Lemma}
\newtheorem{proposition}[theorem]{Proposition}
\newtheorem{corollary}[theorem]{Corollary}
\newtheorem{conjecture}[theorem]{Conjecture}

\theoremstyle{definition}
\newtheorem{definition}[theorem]{Definition}
\newtheorem{example}[theorem]{Example}

\theoremstyle{remark}
\newtheorem{remark}[theorem]{Remark}

\numberwithin{equation}{section}

%    Absolute value notation
\newcommand{\abs}[1]{\lvert#1\rvert}

%    Blank box placeholder for figures (to avoid requiring any
%    particular graphics capabilities for printing this document).

\begin{document}

\title{Heavy sets and index bounded relative symplectic cohomology}

%    Information for first author
\author{Yuhan Sun}

\date{\today}

%    Address of record for the research reported here
\address{Hill Center, Department of Mathematics, Rutgers University, 110 Frelinghuysen Rd, Piscataway NJ 08854.}

\email{sun.yuhan@hotmail.com}
%    \thanks will become a 1st page footnote.

\begin{abstract}
We use relative symplectic cohomology to detect heavy sets, with the help of index bounded contact forms. This establishes a relation between two notions SH-heaviness and heaviness, which partly answers a conjecture of Dickstein-Ganor-Polterovich-Zapolsky in the symplectically aspherical setting. 
\end{abstract}

\maketitle

\tableofcontents

%% The correct journal style for \specialsection is all uppercase; a known bug
%% in amsart.cls prevents this, so input must be uppercase until it is fixed.

\section{Introduction}

Heavy and super-heavy sets were introduced by Entov and Polterovich in \cite{EP}. These notions express certain aspects of symplectic rigidity of subsets in closed symplectic manifolds, through deep relations between the quantum cohomology and the Hamiltonian dynamics. In this article we will use a recent invariant, the relative symplectic cohomology by Varolgunes \cite{Var}, to give a sufficient condition for some compact sets being heavy. As an application, we find many examples of singular Lagrangian sets that are heavy. 

The relative symplectic cohomology assigns a module $SH_{M}(K; \Lambda_{0})$ over the Novikov ring, and an algebra $SH_{M}(K; \Lambda)$ over the Novikov field to any compact subset $K$ of a closed symplectic manifold $M$. This assignment tells us how the quantum cohomology ring $QH(M)$ is distributed among compact subsets of $M$. And these pieces can be glued together by a notable Mayer-Vietoris process \cite{Var}.

This idea of distributing the quantum cohomology ring to compact sets was further explored by Dickstein-Ganor-Polterovich-Zapolsky \cite{DGPZ}. They successfully combined the theory of the relative symplectic cohomology with the theory of ideal-valued measures which was first studied by Gromov \cite{Gromov}, to construct a \textit{symplectic ideal-valued quasi-measure} on any closed symplectic manifold. In particular, a new notion of SH-heavy sets was defined. Comparing these two notions of heaviness becomes an interesting problem. They proposed the following conjecture.

\begin{conjecture}[Conjecture 1.52 \cite{DGPZ}]\label{con}
	A compact subset of a closed symplectic manifold is heavy if and only if it is SH-heavy.
\end{conjecture}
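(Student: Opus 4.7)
The plan is to prove the equivalence by producing, for every compact subset $X$ of a closed symplectic manifold $M$, a spectral invariant on $SH_M(X)$ that simultaneously characterizes both heaviness and SH-heaviness, and then running both implications through it. I would carry this out in four steps, treating the two directions asymmetrically.

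First, I would extend the spectral invariants of this paper from the index-bounded Liouville setting to arbitrary compact $X \subset M$. The needed objects are a PSS-type map $\pi_X \colon QH(M) \to SH_M(X)$ and a number $c(H, \alpha) \in \mathbb{R}$ defined as the infimum of action levels of Floer representatives of $\alpha$ in the telescope model of \cite{Var2021}. Without index-boundedness the action filtration does not survive the acceleration colimit, so I would pass to a suitably completed Novikov coefficient ring and define $c$ by first filtering each truncated telescope stage and then taking a completed limit. The properties to verify are: $c(H, \pi_X(1))$ is finite whenever $\pi_X(1) \neq 0$, is Lipschitz and monotone in $H$, and satisfies $c(H, \pi_X(1)) \ge \inf_X H$.

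Second, for SH-heavy $\Rightarrow$ heavy, assume $\pi_X(1) \neq 0$. Then $c(\,\cdot\,, \pi_X(1))$ descends to a partial symplectic quasi-state $\zeta_X$ with $\zeta_X(H) \ge \inf_X H$, and comparing $\zeta_X$ with the Oh-Schwarz quasi-state through the PSS square relating $\pi_X$ to the closed-string spectral invariant recovers heaviness in the sense of Entov-Polterovich. This is essentially the direction the paper treats under index-boundedness, so the new content is checking that the completion in Step 1 preserves the factorization.

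Third, for the converse heavy $\Rightarrow$ SH-heavy, I would argue by contraposition: if $\pi_X(1) = 0$, then at some stage of the acceleration telescope the unit is a boundary, so one obtains a Floer chain $\beta$ with $\partial \beta$ representing $1$ and action concentrated away from $X$. From such a $\beta$ I would construct a Hamiltonian vanishing on $X$ whose Oh-Schwarz spectral value $\mu_e(H)$ is strictly negative, contradicting the heaviness inequality $\mu_e(H) \ge \inf_X H = 0$.

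The main obstacle is precisely this last step. Vanishing of $\pi_X(1)$ in $SH_M(X)$ is a statement about a direct limit, and in principle the unit may only become a boundary after infinitely many acceleration stages with no uniform action bound, so there is no a priori witness chain $\beta$ at any finite stage. Extracting a quantitative, action-controlled nullhomology from unconditional vanishing in the colimit is the genuine technical gap, and it is exactly why the paper imposes index-boundedness. A complete proof of the conjecture would, in my view, require a new persistence or completeness theorem for the Varolgunes telescope that replaces the index-bounded hypothesis with a structural statement valid for all compact $X$; without such an ingredient the heavy $\Rightarrow$ SH-heavy direction cannot be closed.
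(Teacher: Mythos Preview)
The statement you are attempting to prove is a \emph{conjecture} that the paper does not prove; it is stated as open, and the paper only establishes one direction (SH-heavy $\Rightarrow$ heavy, Theorem~\ref{heavy}) under the additional hypotheses that $M$ is symplectic Calabi--Yau and $K$ is an integral Liouville domain with index bounded boundary. The other direction is quoted from \cite{DGPZ} under similar hypotheses. There is therefore no proof in the paper to compare your proposal against.

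Your proposal is not a proof either, as you yourself concede in the final paragraph. But the gap is not confined to Step~3. Step~1 is already the heart of the matter and is not carried out: constructing spectral invariants for $SH_M(X)$ for \emph{arbitrary} compact $X$, finite on nonzero classes and Lipschitz in $H$, is precisely what the index bounded hypothesis buys in this paper (via the reduction to a degreewise finite-dimensional complex of lower orbits and Usher's boundary-depth argument, Lemma~\ref{bound}). You offer no replacement mechanism; ``pass to a suitably completed Novikov coefficient ring'' does not by itself produce a finite spectral number, since without some finiteness on the complex the supremum defining $c(H,\alpha)$ can be $+\infty$. Step~2 then rests entirely on Step~1, so the SH-heavy $\Rightarrow$ heavy direction is also not established in your outline beyond what the paper already does.

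There is also a definitional slip. You identify SH-heaviness of $X$ with $\pi_X(1)\neq 0$ in $SH_M(X)$, but SH-heavy means $\tau(X)\neq 0$, where $\tau(X)$ is an intersection of kernels of restriction maps to \emph{complements} $\overline{M\setminus U}$ over neighborhoods $U\supset X$. The paper deduces $SH_M(K)\neq 0$ from SH-heaviness only via the interpolation theorem and Mayer--Vietoris, both of which use the index bounded hypothesis. So even the translation between ``SH-heavy'' and ``$\pi_X(1)\neq 0$'' that your Steps~2--3 take for granted is part of what must be proved in the general case.
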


Under an \textit{index bounded} condition, Dickstein-Ganor-Polterovich-Zapolsky proved that heaviness implies SH-heaviness.

\begin{theorem}[Corollary 1.55 \cite{DGPZ}]\label{one}
	If $(M, \omega)$ is symplectically aspherical and $K$ is a heavy contact-type region with incompressible index bounded boundary, then $K$ is SH-heavy.
\end{theorem}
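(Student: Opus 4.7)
The plan is to promote the heaviness witness in $QH(M)$ to a nonvanishing class in the relative symplectic cohomology $SH_M(K)$, thereby obtaining SH-heaviness in the sense of \cite{DGPZ}. Heaviness furnishes an idempotent $e \in QH(M)$ with the Entov-Polterovich inequality $\zeta_e(H) \ge \inf_K H$ for every autonomous $H$, and symplectic asphericity ensures that no sphere bubbling disturbs the action filtrations of the Floer complexes at play.

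The first step is to install spectral invariants $\rho_K(\cdot,H)$ on $SH_M(K)$. One works with a cofinal family of acceleration Hamiltonians which vanish on $K$ and have contact-type slopes at $\partial K$ tending to infinity. Because the contact form on $\partial K$ is index bounded, in any prescribed window of action and degree only finitely many Reeb orbit classes contribute, so the truncated complexes stabilize along the acceleration direction and spectral numbers descend to the inverse limit defining $SH_M(K)$. The incompressibility of $\partial K$ is used to cap contractible periodic orbits near $\partial K$ by disks lying inside $K$, which guarantees that action values on the relative complex are compatible with those computed in the ambient Floer theory.

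The second step is to relate $\rho_K$ to the Entov-Polterovich invariants via the restriction map $\pi_K : QH(M) \to SH_M(K)$. A PSS-type construction delivers a spectral comparison of the form $\rho_K(\pi_K(\alpha), H) \ge \zeta_\alpha(H)$ for Hamiltonians vanishing on $K$. Inserting the heaviness witness $e$ together with the hypothesis $\zeta_e(H) \ge \inf_K H = 0$ yields $\rho_K(\pi_K(e), H) \ge 0 > -\infty$, so $\pi_K(e) \ne 0$ in $SH_M(K)$. Since $e$ is an idempotent and $\pi_K$ is a unital ring map, the unit of $SH_M(K)$ is then forced to be nonzero, which is precisely the SH-heaviness condition.

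The main technical obstacle is the passage to the limit: both the construction of $\rho_K$ and the spectral inequality of the previous step must remain valid after taking the acceleration limit defining $SH_M(K)$. This is exactly what the index bounded condition is designed to accomplish, since it prevents Reeb orbits of arbitrarily small action from entering the picture and destabilizing the filtration in the limit. The incompressibility and asphericity hypotheses then play the supporting role of making the action computations globally coherent.
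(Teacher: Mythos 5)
The statement you are proving is not actually proved in this paper: it is quoted verbatim from \cite{DGPZ} (their Corollary 1.55) to motivate the converse direction, which is the paper's main contribution. So the comparison has to be against the definition of SH-heaviness itself, and there your proposal has a genuine gap at the very last step. You conclude that $\pi_K(e)\neq 0$ in $SH_M(K)$, i.e.\ that the unit of $SH_M(K)$ is nonzero, and call this ``precisely the SH-heaviness condition.'' It is not. By the definition recalled in Section 2.4, $K$ is SH-heavy iff $\tau(K)=\bigcap_{K\subset U}\ker\bigl(r: SH_M(M;\Lambda)\to SH_M(M-U;\Lambda)\bigr)\neq 0$, i.e.\ some nonzero class of $QH(M)$ is annihilated by restriction to the \emph{complement} of every neighborhood $U$ of $K$. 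What you have established (granting your spectral-invariant machinery) is that $K$ is \emph{SH-visible}, a different notion. The two are not interchangeable: the lemma in Section 5 of the paper proves only the implication SH-heavy $\Rightarrow$ SH-visible, using the interpolation theorem for index bounded boundaries together with Mayer--Vietoris, and the reverse implication is exactly what your argument would need but never addresses. Note also that if you tried to use the unit itself as the witness for $\tau(K)\neq 0$, you would need $e\in\ker\bigl(r:QH(M)\to SH_M(M-U)\bigr)$, which by unitality of $r$ forces $SH_M(M-U)=0$ for all $U\supset K$ --- essentially SH-fullness of $K$, far stronger than anything your construction produces. None of your steps engage with $SH_M(M-U)$ at all, so the argument proves a statement about the wrong object.

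A secondary concern: the comparison $\rho_K(\pi_K(\alpha),H)\ge \zeta_\alpha(H)$ mixes a non-asymptotic spectral number of a single Hamiltonian with the homogenized quasi-state $\zeta_\alpha(H)=-\lim_k c^*(\alpha,-kH)/k$; to extract anything from heaviness you must run the estimate along the whole sequence $-kH$ and control the limit, which is where the index bounded condition does its real work (compare the proof of Theorem 3.4 in the paper, which runs the analogous estimate in the opposite direction, from $r(e)\neq 0$ to heaviness). But even with that inequality granted, the argument lands on SH-visibility rather than SH-heaviness, so the main missing idea is a mechanism for killing quantum classes on the complements $M-U$.
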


Our first result is to prove that SH-heaviness implies heaviness, under a similar geometric setting.

\begin{theorem}[Corollary \ref{c:main}]\label{heavy}
Let $(M, \omega)$ be a symplectically aspherical manifold and let $K$ be an index bounded domain. If $K$ is SH-heavy then $K$ is heavy.\footnote{In this article we only talk about heaviness with respect to the unit of the quantum cohomology.}
\end{theorem}

\begin{remark}
	We remark that our definition of the index bounded condition is slightly different than those in \cite{DGPZ} and \cite{TVar}. See (\ref{ib}).
\end{remark}

The idea of the proof is motivated by a condition that certain smooth Lagrangian submanifold is heavy, where the quantum cohomology and the Lagrangian Floer theory are related by a closed-open map. See \cite{EP} for the monotone case and \cite{FOOO} for the general case.

\begin{theorem}[Theorem 1.6 \cite{FOOO}]
	For a closed relative spin weakly unobstructed Lagrangian submanifold $L$ of a symplectic manifold $M$. If the self Lagrangian Floer cohomology $HF(L)$ is non-zero, then $L$ is heavy.
\end{theorem}

We will have an analogue of this theorem for the relative symplectic cohomology.

\begin{theorem}[Theorem \ref{t:mainproof}]\label{main}
	Let $(M, \omega)$ be a symplectically aspherical manifold and let $K$ be an index bounded domain. If $SH_M(K; \Lambda)$ is non-zero then $K$ is heavy.
\end{theorem}

The invariant $SH_M(K; \Lambda)$ is a unital $\Lambda$-algebra, where the product structure was constructed by Tonkonog-Varolgunes \cite{TVar}. Here we study the relation between its unit and certain spectral invariants. Then Theorem \ref{heavy} can be deduced from Theorem \ref{main}.

Now we discuss applications of the above theorem. The relative symplectic cohomology sometimes only depends on the intrinsic geometry of $K$, not on the embedding of $K$ into $M$. Consider an index bounded domain $K$ in a symplectic aspherical manifold $(M, \omega)$, then $(K, \omega\mid_{K})$ is a convex symplectic manifold. One can define the classical symplectic cohomology $SH(\hat{K}; \Lambda)$, see \cite{Vit} and Section 5.3.2 \cite{DGPZ}. Here $\hat{K}$ is the symplectic completion of $K$. 

\begin{theorem}[Theorem 1.57 \cite{DGPZ}]\label{t:SH}
	For $K$ and $M$ as above, if $\pi_{1}(\partial K)\to \pi_{1}(M)$ is injective, then $SH_M(K; \Lambda)$ is isomorphic to $SH(\hat{K}; \Lambda)$. Moreover
	$$
	\ker(H^*(M; \Lambda)\to H^*(K; \Lambda))\subset \ker(r:SH_M(M; \Lambda)\to SH_M(K; \Lambda)).
	$$
\end{theorem}

Hence Theorem \ref{main} and Theorem \ref{t:SH} together give many examples of heavy sets, by only considering intrinsic properties of $K$.

\begin{corollary}
	Let $K$ and $M$ be as above with $\pi_{1}(\partial K)\to \pi_{1}(M)$ being injective. If $SH(\hat{K}; \Lambda)\neq 0$ then $K$ is heavy in $M$.  
\end{corollary}
\begin{proof}
	This follows from Theorem \ref{main} and Theorem \ref{t:SH}. 
\end{proof}

\begin{corollary}\label{c:complement}
	Let $K$ and $M$ be as above with $\pi_{1}(\partial K)\to \pi_{1}(M)$ being injective. Then $\overline{M-K}$ is heavy in $M$. Particularly, $K$ is not super-heavy in $M$.
\end{corollary}
\begin{proof}
	Since $K$ is a compact domain in $M$, the top degree volume class of $M$ is always in $$
	\ker(H^*(M; \Lambda)\to H^*(K; \Lambda)).
	$$ 
	By Theorem \ref{t:SH} it is also in $$
	\ker(r:SH_M(M; \Lambda)\to SH_M(K; \Lambda)).
	$$ 
	Then using the Mayer-Vietoris property, we can show $SH_M(\overline{M-K}; \Lambda)\neq 0$. See Theorem \ref{t:comp} and Corollary \ref{c:comp} for more details, where we study the heaviness of the complement of an index bounded domain.
\end{proof}

A sample example of the above corollary is that

\begin{example}
	Let $M$ be a symplectically aspherical manifold with dimension $\dim_M\geq 4$ and $K$ be a Weinstein neighborhood of a Lagrangian sphere $S$, induced by the round metric. The boundary $\partial K$ is, after small perturbation, index bounded with respect to a non-degenerate contact form. On the other hand, the symplectic cohomology $SH(\hat{K}; \Lambda)$ is known to be non-zero and infinite-dimensional. Hence the above two Corollaries tell us that both $K$ and $\overline{M-K}$ are heavy. We remark that the heaviness of $K$ also follows that $S$ has a non-zero self Floer cohomology since it is weakly exact. The heaviness of $\overline{M-K}$ is new. See Section 5 \cite{S} for other examples of index bounded Weinstein neighborhoods of Lagrangians.
\end{example}

Next we discuss another family of examples, coming from skeleta with respect to a divisor. For a closed symplectically aspherical manifold $M$ and a chosen Giroux divisor $D$ (Definition \ref{divsor}), a suitable skeleton $L$ in $M-D$ was constructed in \cite{TVar}. Also see \cite{BSV} for the skeleton in the monotone case. Moreover they proved that $SH_{M}(\bar{U}; \Lambda)\neq 0$ for any neighborhood $U$ of $L$, see Theorem 1.24 \cite{TVar}. Hence we can prove the heaviness of the skeleton $L$ by shrinking $U$. 

\begin{theorem}[Corollary \ref{c:skeleta}]\label{ske}
	For $(M, \omega)$ being symplectically aspherical and $D$ being a Giroux divisor in $M$, the Lagrangian skeleton $L$ is a heavy set.
\end{theorem}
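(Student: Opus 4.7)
The plan is to approximate the skeleton $L$ from outside by a decreasing family of compact neighborhoods to which Theorem~\ref{main} applies, and then pass to the limit in the defining inequality for $\zeta_{[M]}$-heaviness. This reduces the theorem to a non-vanishing statement about the restriction map, which is provided by the unitality of $r$ together with Theorem~1.24 of~\cite{TVar}.

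First I would construct a nested family $V_1\supset V_2\supset \cdots$ of open neighborhoods of $L$ with $\bigcap_n \bar V_n = L$, such that each $\bar V_n$ is an integral Liouville domain with index bounded boundary. Since the Giroux divisor $D$ equips $M-D$ with a Liouville structure whose stable set is precisely the skeleton $L$, natural candidates for $\bar V_n$ are obtained by pushing $\partial(M-D)$ inward along the negative Liouville flow. Liouville rescaling preserves the Liouville domain property and the integrality of the boundary primitive (using the integrality of $[\omega]$ built into the Giroux divisor data), and it rescales the Reeb flow by a positive constant, so the Conley--Zehnder indices and hence the index bounded condition~(\ref{ib}) are inherited from those of $\partial(M-D)$.

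Second, Theorem~1.24 of~\cite{TVar} gives $SH_M(\bar V_n)\neq 0$ for every $n$. The Tonkonog--Varolgunes restriction map $r\colon QH(M)\cong SH_M(M)\to SH_M(\bar V_n)$ is a unital ring map, so $r(e)$ is the unit of $SH_M(\bar V_n)$ and is therefore nonzero. By Theorem~\ref{main}, each $\bar V_n$ is $\zeta_{[M]}$-heavy, i.e.\ $\zeta_{[M]}(H)\geq \inf_{\bar V_n} H$ for every $H\in C^{0}(M)$. Passing to the limit is then straightforward: for any continuous $H$ the decreasing intersection $\bigcap_n \bar V_n = L$ together with uniform continuity of $H$ on a compact neighborhood of $L$ yields $\inf_{\bar V_n} H \to \inf_L H$, and taking the supremum over $n$ of the heaviness inequality produces $\zeta_{[M]}(H)\geq \inf_L H$, which is heaviness of $L$.

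The main obstacle is the construction step: checking that the neighborhoods $\bar V_n$ coming from the Giroux divisor complement in~\cite{TVar} really satisfy the precise definition~(\ref{ib}) of index boundedness used here, and that this property is stable under shrinking toward $L$. Once that geometric input is in place, the rest of the argument is a formal consequence of Theorem~\ref{main}, the unitality of the restriction map, and the continuity of $H$.
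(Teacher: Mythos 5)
Your proposal is essentially the paper's own argument: the paper takes exactly the nested family $W_t$ obtained by flowing the Tonkonog--Varolgunes domain $W\subset M-D$ inward along the reverse Liouville flow, invokes Theorem 1.24 of \cite{TVar} plus unitality of $r$ to get $r(e)\neq 0$ in $SH_M(W_t)$, applies Theorem \ref{main} to conclude each $W_t$ is heavy, and passes to the limit $\min_{W_t}F\to\min_{L}F$ (stated contrapositively). The only cosmetic difference is that the paper justifies ``integrality'' via the weaker discreteness condition of Remark \ref{discrete} (integer weights $w_i$ of the Giroux divisor) rather than a literal integral lift of $[\omega]$, which matches what you gesture at.
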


One simple example of the above theorem is $M$ being a symplectic 2-torus and $D$ being a finite collection of points. Then the skeleton $L$ is a heavy set. Particularly, for $D$ being one point and for a suitable choice of Liouville vector field on the complement of $D$, $L$ is the union of circles and arcs. In this case, \cite{Ish} and \cite{Mor} already proved that $L$ is not only heavy but also super-heavy. Theorem \ref{ske} gives more examples of heavy sets, possibly singular and in higher dimensions.

Now we give a quick sketch of the proof of Theorem \ref{main}. The relative symplectic cohomology of the domain $K$ can be computed by using a family $\lbrace G_{n}\rbrace$ of increasing Hamiltonian functions which converge to zero on $K$ and go to infinity outside $K$. Given any Hamiltonian function $f$ on $M$ which is slightly less than zero on $K$, see Figure \ref{Ham}, we use continuation maps between $CF(f)$ and $CF(G_{n})$ to estimate the spectral number of the unit with respect to $f$. By a special choice of $G_n$, the outputs of the continuation maps could be orbits in $K$, which we call ``lower orbits'', or could be outside $K$, which we call ``upper orbits''. One key technical procedure is a process of ``ignoring upper orbits'' (Lemma \ref{l:ig}) developed in \cite{M2020, S}. See also \cite{BSV} for a similar process in the monotone case, but with a different proof. By this process we know the output of the continuation map are lower orbits, whose Hamiltonian value is roughly zero. Together with the index bounded condition, we can control the action of the output of the continuation map. This gives the desired estimate of the spectral number of the unit.

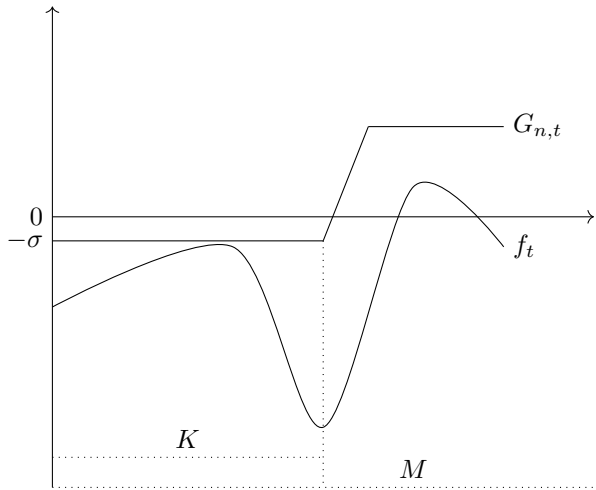
\begin{figure}
	\begin{tikzpicture}[xscale=1.2, yscale=0.8]
		\draw plot [smooth] coordinates {(0,1) (2,2) (3,-1) (4,3) (5,2)};
		\draw [->] (0,-2)--(0,6);
		\draw [->] (0,2.5)--(6,2.5);
		\draw (0,2.1)--(3,2.1);
		\draw (3,2.1)--(3.5,4);
		\draw (3.5,4)--(5,4);
		
		\draw node [right] at (5,4) {$G_{n,t}$};
		
		\draw node [right] at (5,2) {$f_t$};
		\draw node [left] at (0,2.1) {$ -\sigma$};
		\draw node [left] at (0,2.5) {$0$};
		\draw [dotted] (3,2.1)--(3,-2);
		\draw [dotted] (0,-1.5)--(3,-1.5);
		\draw node [above] at (1.5,-1.5) {$K$};
		\draw [dotted] (0,-2)--(6,-2);
		\draw node [above] at (4,-2) {$M$};
	\end{tikzpicture}
	\caption{Hamiltonian functions.}\label{Ham}
\end{figure}

\begin{remark}
The result ``$SH_M(K;\Lambda)\neq 0$ implies that $K$ is heavy'' is now proved in full generality in \cite{MSVar}. Their argument uses a chain level algebra structure of the relative symplectic cohomology which is developed in \cite{AGV}. Hence we feel it is still worth recording the proof of Theorem \ref{main} here, which only uses the non-Archimedean vector space structure under a more restricted geometric setting. The proofs in \cite{MSVar} and in this article are different and independent.

In the first version of this article in arXiv, some results were claimed in the symplectic Calabi-Yau setting without the aspherical condition. We withdraw those claims in this article while expecting they are true. The reasons are the following. First, its proof requires certain properties of one version of the relative symplectic cohomology whose generators are capped orbits rather than orbits. Establishing such a theory is expected but beyond the scope of the current article. Second, as mentioned above, general situations can be dealt with by using different methods. So we choose to focus on the symplectically aspherical case here, to illustrate the usage of the geometry of index bounded domains.
\end{remark}

\subsection*{Acknowledgments}
We thank Mark McLean for his generous guidance on related topics. We thank Leonid Polterovich for having arranged a meeting between Umut Varolgunes and the author, which started the author's discussions with Cheuk Yu Mak and Umut Varolgunes. We thank Adi Dickstein, Yaniv Ganor, Leonid Polterovich and Frol Zapolsky for helpful discussions. We thank an anonymous referee for very constructive input. The author was partially supported by the EPSRC grant EP/W015889/1.

\section{Floer theory background}
We work on a closed symplectically aspherical manifold $(M, \omega)$. This means that
$$
\omega\mid_{\pi_{2}(M)}= c_1(TM)\mid_{\pi_{2}(M)}=0.
$$
In this section, we briefly review the Hamiltonian Floer theory, spectral invariants and relative symplectic cohomology.

\subsection{Hamiltonian Floer theory}
We refer to \cite{HS} for more details on Hamiltonian Floer theory. However, our sign conventions are the same as in Section 3 \cite{Var}, which will be clarified later in the context.

Given a time-dependent Hamiltonian function $H_t: [0,1]\times M\to \mathbb{R}$, its Hamiltonian vector field $X_{H_t}$ is determined by $dH_t= \omega(X_{H_t}, \cdot)$. We say $H_t$ is non-degenerate if all its one-periodic Hamiltonian orbits are non-degenerate. In this article, we only study the contractible one-periodic orbits. The action of such an orbit $\gamma$ is
$$
\mathcal{A}_{H_t}(\gamma):= \int_{\gamma}H_t +\int_{w}\omega
$$
where $w: D^2\to M$ is a spanning disk of $\gamma$. An orbit $\gamma$ has a degree
$$
\abs{\gamma}:= n+ CZ(\gamma)
$$
where $CZ(\gamma)$ is the Conley-Zehnder index of $\gamma$ with respect to some spanning disk. Under the symplectic aspherical condition, the assignments of action and degree are independent of the spanning disk. If $H_t$ is a $C^2$-small Morse function and $\gamma$ is a critical point of $H_t$, then $\abs{\gamma}$ equals the Morse index of $\gamma$.

The Hamiltonian Floer chain group $CF^k(H_t)$ is the $\mathbb{C}$-vector space generated by degree-$k$ contractible one-periodic orbits of $H_t$. There is a Floer differential $d: CF^k(H_t)\to CF^{k+1}(H_t)$, which makes all $CF^k(H_t)$ into a chain complex. It is defined by counting Floer cylinders
$$
d(x):= \sum_{\abs{y}=\abs{x}+1} \sharp\mathcal{M}(x,y)y.
$$
We call this chain complex the Hamiltonian Floer complex, and call its homology the Hamiltonian Floer homology $HF^k(H_t)$. In this subsection, all complexes are over $\mathbb{C}$ and we omit it in the notation.

Given two non-degenerate Hamiltonians $H^0_t, H^1_t$ and a suitable homotopy $H^s_t$ of functions connecting them, there is a continuation map
$$
h^{01}: CF^k(H^0_t)\to CF^k(H^1_t),
$$
which is also defined by counting certain Floer cylinders
$$
h^{01}(x):= \sum_{\abs{y}=\abs{x}} \sharp\mathcal{M}(x,y)y.
$$
It is a chain map hence it induces a map, also written as $h^{01}$, from $HF^k(H^0_t)$ to $HF^k(H^1_t)$. When the $s$-derivative of $H^s_t$ is non-negative, we say it is a monotone homotopy.

\begin{remark}
	In the above definitions, we need to choose families of almost complex structures to achieve regularity of moduli spaces of Floer cylinders. We usually omit them in the notation and refer to \cite{HS} for more details.
\end{remark}

In our convention, the Floer differential increases the action and degree of an orbit. If the homotopy $H^s_t$ between two Hamiltonians is monotone, then the continuation map does not decrease the action.

Let $f$ be a Morse function on $M$ and let $CM^*(f)$ be the Morse complex of $M$ (graded as a cohomology theory). There is a PSS map \cite{PSS}
$$
PSS_{H_t}: CM^k(f) \to CF^k(H_t)
$$
defined by counting spiked disks. It induces an isomorphism between $HM^*(f)\cong H^*(M)$ and $HF^*(H_t)$, which we call the PSS isomorphism and also write it as $PSS_{H_t}$.

The PSS map is compatible with the continuation maps. Given two Hamiltonians $H^0_t, H^1_t$ and a homotopy between them, we have 
$$
h^{01}\circ PSS_{H^0_t}= PSS_{H^1_t}
$$
in the homology level.

Given a non-zero class $A\in H^k(M)$ and a Hamiltonian $H_t$, we define the spectral invariant 
$$
c(A, H_t):= \sup \{\mathcal{A}_{H_t}(\gamma)\mid \gamma\in CF^k(H_t), d\gamma =0, [\gamma]= PSS_{H_t}(A) \}.
$$
In most cases we care about the spectral invariant $c(1, H_t)$ of the unit $1\in H^0(M)$. These spectral invariants satisfy lots of good properties. For example, see subsection 3.4 in \cite{EP}. Particularly, by using approximation we can define spectral invariants for any smooth function on $M$. The homogenized spectral invariant of a time-independent function is defined as
$$
\mu(1, H):= \lim_{k\rightarrow \infty} \dfrac{c(1, kH)}{k}.
$$
Then we have the following definition of heavy sets.

\begin{definition}
	A compact subset $K$ of $M$ is called heavy if for any smooth function $H$ on $M$, we have $\mu(1, H)\leq \max_{K}H$.
\end{definition}

We have several remarks about this definition.

\begin{remark}\label{r:def}
	\begin{enumerate}
		\item Our sign conventions here make the Hamiltonian Floer theory a cohomological theory, while in \cite{EP} it is a homological theory. See Section 4.2 \cite{LZ} for a comparison. Hence our definition above is in terms of $\max$ rather than $\min$.
		\item Strictly speaking, the above definition should be ``heavy with respect to the unit $1$''. Generally one can also talk about heaviness with respect to other idempotent of the quantum cohomology. In this article we only talk about the unit.
		\item The spectral invariant has the shifting property: $\mu(1, H+C)= \mu(1, H)+ C$ for any constant $C$. Hence we only need to check functions which are non-positive on $K$ to verify the above definition.
	\end{enumerate}
\end{remark}

\subsection{Relative symplectic cohomology}

Now we review the relative symplectic cohomology by Varolgunes \cite{Var}.

The coefficient rings will be used are the Novikov ring
$$
\Lambda_{0}=\lbrace \sum_{i=0}^{\infty}a_{i}T^{\lambda_{i}}\mid a_{i}\in \mathbb{C}, \lambda_{i}\in\mathbb{R}_{\geq 0}, \lambda_{i}<\lambda_{i+1}, \lim_{i\rightarrow \infty}\lambda_{i}=+\infty \rbrace
$$
and the Novikov field
$$
\Lambda=\lbrace \sum_{i=0}^{\infty}a_{i}T^{\lambda_{i}}\mid a_{i}\in \mathbb{C}, \lambda_{i}\in\mathbb{R}, \lambda_{i}<\lambda_{i+1}, \lim_{i\rightarrow \infty}\lambda_{i}=+\infty \rbrace.
$$
Here $T$ is a formal variable of degree zero. A valuation defined on $\Lambda$ and $\Lambda_{0}$ is
$$
v(0\neq\sum_{i=0}^{\infty}a_{i}T^{\lambda_{i}}):= \min\{\lambda_{i}\mid a_i\neq 0\}, \quad v(0):= +\infty.
$$

Given a non-degenerate Hamiltonian $H_t$, let $CF^k(H_t; \Lambda_{0})$ be the free $\Lambda_{0}$-module generated by degree-$k$ contractible one-periodic orbits of $H_t$. We extend the valuation from $\Lambda_{0}$ to $CF^k(H_t; \Lambda_{0})$ as follows. For an element $x= \sum a_i \gamma_{n} \in CF^k(H_t; \Lambda_{0})$, define
$$
v(x):= \min_i \{ v(a_i)\}.
$$
That is, the valuation of an element in $CF^k(H_t; \Lambda_{0})$ is determined by its coefficients in $\Lambda_{0}$, independent of the orbits. The weighted Floer differential is defined as the classical Floer differential weighted by the action difference.
$$
d_T(x):= \sum_{\abs{y}=\abs{x}+1} \sharp\mathcal{M}(x,y)yT^{\mathcal{A}_{H_t}(y)- \mathcal{A}_{H_t}(x)}.
$$
We call this chain complex the weighted Hamiltonian Floer complex, and call its homology the weighted Hamiltonian Floer homology $HF^k(H_t; \Lambda_{0})$. Similarly, given two non-degenerate Hamiltonians $H^0_t, H^1_t$ and a monotone homotopy $H^s_t$ connecting them, there is a weighted continuation map
$$
h_T^{01}: CF^k(H^0_t; \Lambda_{0})\to CF^k(H^1_t; \Lambda_{0}),
$$
by defining
$$
h_T^{01}(x):= \sum_{\abs{y}=\abs{x}} \sharp\mathcal{M}(x,y)yT^{\mathcal{A}_{H^1_t}(y)- \mathcal{A}_{H^0_t}(x)}.
$$
Since our homotopy is monotone, the continuation map does not decrease the action so above maps are well-defined over $\Lambda_{0}$.

Now let $K$ be a compact subset of $M$. Consider a sequence of non-degenerate Hamiltonians $\{ H_{n,t}\}$ such that 
\begin{enumerate}
	\item $H_{n,t}\leq H_{n+1,t}$ for all $n\geq 1$.
	\item $H_{n,t}$ converge to zero on $K$ and diverge to positive infinity outside $K$.
\end{enumerate}
Then we choose suitable families of almost complex structures and monotone homotopies connecting adjacent $H_{n,t}$ and $H_{n+1,t}$, to get a sequence of Hamiltonian Floer complexes, connected by continuation maps
$$
\mathcal{C}:= CF^*(H_{1,t}; \Lambda_{0})\to CF^*(H_{2,t}; \Lambda_{0})\to \cdots.
$$
We call such a sequence of complexes a Floer one-ray.

Next we use a Floer one-ray to form a new complex, called the Floer telescope. Its underlying complex is defined as
$$
tel^*(\mathcal{C}):=\oplus_{n}(CF^*(H_{n,t}; \Lambda_{0})\oplus CF^*(H_{n,t}; \Lambda_{0})[1])
$$ 
where $CF^*(H_{n,t}; \Lambda_{0})[1]$ means shifting the degree by one. By using the weighted Floer differential $d_T^n$ and the weighted continuation map $h_T^{n(n+1)}$, we define the differential $\delta$ of the telescope as follows. If $x_{n}\in CF^k(H_{n}; \Lambda_{0})$ then
\begin{equation}
	\delta x_{n}= (-1)^{k}d_{T}^n x_{n} \in CF^{k+1}(H_{n}; \Lambda_{0}), 
\end{equation}
and if $x'_{n}\in CF^k(H_{n}; \Lambda_{0})[1]$ then
\begin{equation}\label{eq:delta}
	\begin{aligned}
		\delta x'_{n}&= ((-1)^{k}x'_{n}, (-1)^{k+1}d_{T}^n x'_{n}, (-1)^{k+1}h^{n(n+1)}_T x'_{n})\\
		&\in CF^{k}(H_{n}; \Lambda_{0})\oplus CF^{k+1}(H_{n}; \Lambda_{0})[1]\oplus CF^k(H_{n+1}; \Lambda_{0}).
	\end{aligned}
\end{equation}
One can check that $\delta^2=0$ hence we have a complex $(tel^*(\mathcal{C}), \delta)$. A typical element in $tel^*(\mathcal{C})$ will be written as
$$
x= (x_1, x_1', x_2, x_2', \cdots).
$$
Since $tel^*(\mathcal{C})$ is a direct sum, there are only finitely many non-zero terms in $x$. We define a valuation on $tel^*(\mathcal{C})$ as
$$
v(x):= \min_i \{ v(x_i), v(x_i')\},
$$
which extends the valuation on $CF^*(H_{n,t}; \Lambda_{0})$.

The completion of $tel^*(\mathcal{C})$ is defined as
$$
\widehat{tel^*(\mathcal{C})}:= \varprojlim_{r\to +\infty} tel^*(\mathcal{C})\otimes_{\Lambda_{0}} (\Lambda_{0}/T^r\cdot \Lambda_{0}).
$$
One can check that the differential $\delta$ extends to the completion. Moreover, one can show that $\widehat{tel^*(\mathcal{C})}$ has a more concrete expression, see 2.4 \cite{Var}
$$
\widehat{tel^*(\mathcal{C})}= \{ \sum_{l=1}^{+\infty} x^l\mid x^l\in tel^*(\mathcal{C}), \lim_{l}v(x^l)= +\infty \}.
$$
That is, an element 
$$
x= (x_1, x_1', x_2, x_2', \cdots)\in \Pi_{n}(CF^*(H_{n,t}; \Lambda_{0})\oplus CF^*(H_{n,t}; \Lambda_{0})[1])
$$
is in  $\widehat{tel^*(\mathcal{C})}$ if and only if $v(x_i), v(x_i')$ go to positive infinity.

\begin{definition}
	The homology of the completed telescope $(\widehat{tel^*(\mathcal{C})}, \delta)$ is called the relative symplectic cohomology of $K$ in $M$ over $\Lambda_{0}$, written as $SH_M(K; \Lambda_{0})$.
\end{definition}

In Proposition 3.3.4 \cite{Var}, it is shown that for different choices of almost complex structures, defining Hamiltonians and homotopies, the resulting homology groups of the completed telescopes are isomorphic. Hence $SH_M(K; \Lambda_{0})$ is an invariant of $K$ and $M$. This invariant has lots of good properties, notably the Mayer-Vietoris property. We list the properties that will be used in this article, and refer to \cite{Var} for others.

\begin{theorem}[Section 1 \cite{Var}]\label{t:property}
	The invariant $SH_M(K; \Lambda_{0})$ satisfies that
	\begin{enumerate}
		\item For compact sets $K_0\subset K_1$, there is a module map 
		$$
		r_{10}: SH_M(K_1; \Lambda_{0})\to SH_M(K_0; \Lambda_{0}),
		$$ 
		called the restriction map. If we have $K_0\subset K_1\subset K_2$, then $r_{10}\circ r_{21}= r_{20}$.
		\item $SH_M(M; \Lambda_{0})\cong H(M; \mathbb{C})\otimes_{\mathbb{C}}\Lambda_{+}$, where $\Lambda_{+}$ is the maximal ideal of $\Lambda_{0}$.
		\item Let $K_{1}, K_{2}$ be two compact domains with disjoint boundary. Then there is a long exact sequence
		$$
		\begin{aligned}
			\cdots&\to SH_{M}(K_{1}\cup K_{2}; \Lambda_{0})\to SH_{M}(K_{1}; \Lambda_{0})\oplus SH_{M}(K_{2}; \Lambda_{0})\\
			&\to SH_{M}(K_{1}\cap K_{2}; \Lambda_{0})\to\cdots.
		\end{aligned}
        $$ 
	\end{enumerate}
\end{theorem}

We will also use the relative symplectic cohomology over the Novikov field.

\begin{definition}
	The relative symplectic cohomology of $K$ in $M$ over $\Lambda$ is
	$$
	SH_M(K; \Lambda):= SH_M(K; \Lambda_{0})\otimes_{\Lambda_{0}} \Lambda.
	$$
\end{definition}

There is another definition of $SH_M(K; \Lambda)$ which will be frequently used, see Remark 2.4 \cite{TVar}. For a non-degenerate Hamiltonian $H_{t}$, we define $CF^k(H_{t}; \Lambda)$ to be the $\Lambda$-vector space generated by degree-$k$ contractible one-periodic orbits of $H_t$. The differential here is the weighted Floer differential. Then we use the above sequence of Hamiltonians $\{ H_{n,t}\}$ to form a Floer one-ray
$$
\mathcal{C}_\Lambda:= CF^*(H_{1,t}; \Lambda)\to CF^*(H_{2,t}; \Lambda)\to \cdots
$$
by using weighted continuation maps. The telescope $tel^*(\mathcal{C}_\Lambda)$ carries a valuation $v$ which is still defined by the minimum valuation as above. We can complete $tel^*(\mathcal{C}_\Lambda)$ with respect to this valuation to get $\widehat{tel^*(\mathcal{C}_\Lambda)}$. It can be shown that $H(\widehat{tel^*(\mathcal{C}_\Lambda)})$ is isomorphic to $SH_M(K; \Lambda)$. Similar to the above situation over $\Lambda_0$, the completed telescope $\widehat{tel^*(\mathcal{C}_\Lambda)}$ has a concrete expression
$$
\widehat{tel^*(\mathcal{C}_\Lambda)}= \{ \sum_{l=1}^{+\infty} x^l\mid x^l\in tel^*(\mathcal{C}_\Lambda), \lim_{l}v(x^l)= +\infty \}.
$$
That is, an element 
$$
x= (x_1, x_1', x_2, x_2', \cdots)\in \Pi_{n}(CF^*(H_{n,t}; \Lambda)\oplus CF^*(H_{n,t}; \Lambda)[1])
$$
is in  $\widehat{tel^*(\mathcal{C}_\Lambda)}$ if and only if $v(x_i), v(x_i')$ go to positive infinity.

In \cite{TVar}, it is shown that $SH_M(K; \Lambda)$ is a unital $\Lambda$-algebra and the restriction maps respect the units.

\begin{theorem}(Subsection 5.5 \cite{TVar})
	For each compact subset $K$ of $M$, there is an element $e_K\in SH^0_M(K; \Lambda)$ called the unit. It satisfies the following
	\begin{enumerate}
		\item $SH_M(K; \Lambda)=0$ if and only if $e_K=0$.
		\item For $K_0\subset K_1$, we have $r_{10}(e_{K_1})=e_{K_0}$.
	\end{enumerate}
\end{theorem}

\subsection{Index bounded domains}
We consider a triple $(K, \partial K, \alpha)$ such that
\begin{enumerate}
	\item $K$ is a compact domain in $M$.
	\item $\alpha$ is a contact form on $\partial K$.
	\item $d\alpha= \omega\mid_{\partial K}$.
	\item The local Liouville vector field points outwards along $\partial K$.
\end{enumerate}
Then $\partial K$ admits a neighborhood in $M$ that is symplectomorphic to $[1-\epsilon, 1+\epsilon]\times \partial K, d(r\alpha)$. Here $r$ is the coordinate on $[1-\epsilon, 1+\epsilon]$ and $\partial K$ is identified with $\{1\}\times \partial K$. Consider a Hamiltonian function $H$ on $M$ which equals $f(r)$ on $[1-\epsilon, 1+\epsilon]\times \partial K$. A standard computation shows that a one-periodic orbit $\gamma$ of $H$ which is contained in $[1-\epsilon, 1+\epsilon]\times \partial K$ is a Reeb orbit of the contact form $\alpha$. If $\alpha$ is a non-degenerate contact form and $\gamma$ is contractible in $M$, we have a well-defined Conley-Zehnder index $CZ(\gamma)$, with $\gamma$ viewed as a Hamiltonian orbit. Next we consider all the linear functions $H_{\lambda}=\lambda r$ for some positive slope $\lambda$. And for any integer $k$ we set 
$$
C_{\lambda, k}:=\sup_{\gamma}\lbrace \abs{\int_{w_\gamma} \omega}\rbrace.
$$
Here $\gamma$ runs over all one-periodic orbits of $H_{\lambda}$ which is contained in $[1-\epsilon, 1+\epsilon]\times \partial K$, contractible in $M$ and with $CZ(\gamma)=k$, and $w_{\gamma}$ is a spanning disk of $\gamma$. We say $(K, \partial K, \alpha)$ is \textit{index bounded} if $\alpha$ is non-degenerate and
\begin{equation}\label{ib}
	\sup_{\lambda \geq 0} \lbrace C_{\lambda, k} \rbrace< +\infty 
\end{equation}
for every integer $k$. This index bounded condition plays an important technical role in some recent studies of Hamiltonian Floer theory, starting from \cite{M2020}. A direct consequence is that for any given $k$, there is no index-$k$ one-periodic orbit of $f(r)$ in $[1-\epsilon, 1+\epsilon]\times \partial K$, if the slopes of $f(r)$ are large enough. 

\begin{remark}
	We formulate the above index bounded condition as a property of the embedding of $K$, rather than an intrinsic property of the contact manifold $(\partial K, \alpha)$. And it is defined by using Conley-Zehnder indices of Hamiltonian orbits rather than Reeb orbits.
	
	In many cases our definition is equivalent to Definition 1.53 \cite{DGPZ} and Definition 1.12 \cite{TVar}. For example, it is the case when $\pi_{1}(\partial K)\rightarrow \pi_{1}(M)$ is injective.
	
	To compare the Conley-Zehnder indices of Hamiltonian orbits with those of Reeb orbits, see Lemma 5.25 \cite{M2020}. They differ by a universally bounded amount.
\end{remark}

\subsection{Symplectic ideal-valued quasi-measures}
Next we review the quantum cohomology ideal-valued quasi-measures defined by Dickstein-Ganor-Polterovich-Zapolsky \cite{DGPZ}. 

Let $K$ be a compact set of a closed smooth symplectic manifold $M$. Define the quantum cohomology ideal-valued quasi-measure of $K$
\begin{equation}
	\tau(K):= \bigcap_{K\subset U} \ker(r: SH_{M}(M; \Lambda)\rightarrow SH_{M}(M-U; \Lambda))
\end{equation}
where $U$ runs over all open sets containing $K$. If $\tau(K)\neq 0$, then we say $K$ is SH-heavy. This measure $\tau$ satisfies several interesting properties \cite{DGPZ}, which indicate its importance in symplectic topology.

\section{Proofs}
In this section we prove Theorem \ref{main}. Let $(K, \partial K, \alpha)$ be an index bounded domain in a symplectically aspherical manifold $(M, \omega)$, we fix a collar neighborhood $U:=[1-\epsilon, 1+\epsilon]\times \partial K$ of $\partial K$. Recall that we assume $\alpha$ is a non-degenerate contact form, the set of periods of its Reeb orbits form a discrete subset of $\mathbb{R}_+$. We write this set as $Spec(\alpha)$.

\begin{definition}\label{d:ad}
	Given the triple $(K, \partial K, \alpha)$, a smooth function $H: M\to \mathbb{R}$ is called admissible if 
	\begin{enumerate}
		\item $H$ has small first and second derivatives outside $U$ such that it only has constant one-periodic orbits outside $U$ and they are non-degenerate.
		\item $H$ only depends on the radial coordinate $r$ in $U$. We write $H=f(r)$ on $U$.
		\item $H< 0$ in $K$ and $f(1+\epsilon/3)\geq 0$.
		\item $f(r)=\lambda r$ on $[1+\epsilon/3, 1+2\epsilon/3]\times \partial K$ for some positive $\lambda\notin Spec(\alpha)$.
		\item $f'(r)\notin Spec(\alpha)$ when $r\in[1,1+\epsilon/3]$.
		\item $f'(r)\geq 0$ on $[1-\epsilon, 1+\epsilon]\times \partial K$.
		\item $f''(r)\leq 0$ on $[1+\epsilon/3, 1+\epsilon]\times \partial K$.
		\item $f''(r)\geq 0$ on $[1-\epsilon, 1+\epsilon/3]\times \partial K$.
	\end{enumerate}
\end{definition}

\begin{figure}
	\begin{tikzpicture}[xscale=0.8, yscale=0.8]
		\draw [->] (-2,0.1)--(4.5,0.1);
		\node [right] at (4.5,0.1) {$r$};
		\draw (-1,-0.2) to [out=0, in=250] (1,1);
		\draw (1,1)--(2,4);
		\draw (2,4) to [out=70, in=180] (3, 5);
		\draw (-1,0)--(-1,0.2);
		\node [above] at (-1,0.3) {\small{$1-\epsilon$}};
		\draw (0,0)--(0,0.2);
		\node [above] at (0,0.3) {\small{$1$}};
		\draw (1,0)--(1,0.2);
		\node [below] at (1,-0.1) {\small{$1+\frac{\epsilon}{3}$}};
		\draw (2,0)--(2,0.2);
		\node [above] at (2,0.3) {\small{$1+\frac{2\epsilon}{3}$}};
		\draw (3,0)--(3,0.2);
		\node [below] at (3,-0.1) {\small{$1+\epsilon$}};
		\draw [snake it] (-2,-0.2)--(-1,-0.2);
		\draw [snake it] (3,5)--(4,5);
	\end{tikzpicture}
	\caption{Admissible Hamiltonian in the radial coordinate.}\label{admissible}
\end{figure}
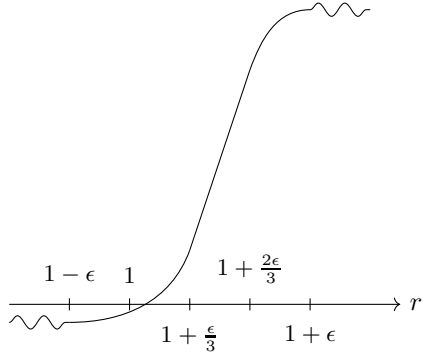

A depiction of admissible Hamiltonian is in Figure \ref{admissible}. The one-periodic orbits of an admissible $H$ fall into two groups: constant orbits outside $U$ and non-constant orbits in $U$. Since $H=f(r)$ in $U$, the non-constant orbits are multiples of Reeb orbits of $\alpha$. For a one-periodic orbit $\gamma$ of $H$, constant or non-constant, the function $H$ is constant on $\gamma$. We call this constant the Hamiltonian value of $\gamma$.

\begin{definition}\label{d:lower}
	A one-periodic orbit $\gamma$ of an admissible $H$ is called a lower orbit if its Hamiltonian value is less than zero. Otherwise it is called an upper orbit.
\end{definition}

By $(4)$ in Definition \ref{d:ad}, there is no one-periodic orbits in $[1+\epsilon/3, 1+2\epsilon/3]\times \partial K$. Hence the Hamiltonian value of an upper orbit of $H$ is at least $\lambda\epsilon/3$.

An admissible function $H$ is time-independent. Each of its non-constant one-periodic orbit carries an $S^1$-symmetry. We use the standard perturbation in \cite{CFHW} to break this symmetry which makes $H$ into a non-degenerate function $H_t$. The perturbations are supported in neighborhoods of non-constant orbits of $H$. They can be chosen to be arbitrarily small. We assume our perturbation satisfies that
\begin{enumerate}
	\item A non-constant lower orbit $\gamma$ of $H$ becomes two non-degenerate orbits $\gamma_{\pm}$, with $\int_{\gamma_{\pm}}H_t <0$.
	\item An non-constant upper orbit $\beta$ of $H$ becomes two non-degenerate orbits $\beta_{\pm}$, with $\int_{\beta_{\pm}}H_t >\lambda\epsilon/3$.
\end{enumerate}

\begin{definition}\label{d:tad}
	A time-dependent admissible function $H_t$ is one obtained from an admissible function $H$ by using above perturbations.
\end{definition}

For a time-dependent admissible function, it still makes sense to talk about its lower and upper orbits by above estimates.

One can directly check the existence of a sequence of time-dependent admissible Hamiltonians $\{ H_{n,t}\}$ such that 
\begin{enumerate}
	\item $H_{n,t}\leq H_{n+1,t}$ for all $n\geq 1$.
	\item $H_{n,t}$ converge to zero on $K$ and diverge to positive infinity outside $K$.
\end{enumerate}
Hence we can use $\{ H_{n,t}\}$ to compute $SH_M(K; \Lambda)$. In the following, when we write $CF(H_{n,t})$ we mean the Floer complex over $\mathbb{C}$ with the classical differential $d$, and when we write $CF(H_{n,t}; \Lambda)$ we mean the Floer complex over $\Lambda$ with the weighted differential $d_T$. Continuation maps should be understood in the same way. For any orbit $\gamma\in CF(H_{n,t})$, define 
\begin{equation}\label{e:change}
	J: CF(H_{n,t})\otimes_{\mathbb{C}}\Lambda \to CF(H_{n,t}; \Lambda), \quad J(\gamma\otimes a):= aT^{\mathcal{A}_{H_{n,t}}(\gamma)}\gamma.
\end{equation}
It is a chain isomorphism. Particularly, if $x\in CF(H_{n,t})$ is $d$-exact, then $x$, viewed as an element in $CF(H_{n,t}; \Lambda)$, is $d_T$-exact.

By the index bounded condition (\ref{ib}), there is a constant $C>0$ such that
$$
\sup_{n} \lbrace \abs{\int_{w_\gamma}\omega} \mid \gamma\in CF^*(H_{n,t}), *=0,1 \rbrace< C.
$$
Let $\lambda_{n}$ be the slope of $H_{n,t}$ in $[1+\epsilon/3, 1+2\epsilon/3]\times \partial K$, then we have that
\begin{lemma}\label{l:upper}
	There exists $N>0$ such that if $n\geq N$ then the action of a degree zero/one upper orbit of $H_{n,t}$ is strictly larger than the action of any degree zero/one lower orbit of $H_{k,t}$ for any $k$.
\end{lemma}
\begin{proof}
	This follows from $\lim_{n} \lambda_{n}= +\infty$, and two estimates
	\begin{enumerate}
		\item If $\gamma$ is a lower orbit of $H_{k,t}$, then $\mathcal{A}_{H_{k,t}}(\gamma)< C$ for any $k$.
		\item If $\gamma$ is an upper orbit of $H_{n,t}$, then $\mathcal{A}_{H_{n,t}}(\gamma)> \lambda_{n}\epsilon/3 -C$.
	\end{enumerate}
\end{proof}

Choosing monotone homotopies connecting $H_{n,t}, H_{n+1,t}$, we can consider the Floer one-ray
$$
\mathcal{C}_\Lambda= CF(H_{N,t}; \Lambda)\to CF(H_{N+1, t}; \Lambda)\to \cdots.
$$
We write the continuation maps as
$$
h^{k(k+1)}_T: CF^0(H_{N+k,t}; \Lambda)\to CF^0(H_{N+k+1,t}; \Lambda).
$$
Since $SH_M(K; \Lambda)$ is independent of the choice of defining Hamiltonians, we have $H(\widehat{tel(\mathcal{C}_\Lambda)})\cong SH_M(K; \Lambda)$.

Pick a closed element $x\in CF^0(H_{N+k,t}; \Lambda)$. It gives an element
$$
\tilde{x}=(0,0\cdots, x,0, \cdots)\in \widehat{tel(\mathcal{C}_\Lambda)}.
$$
That is, the element with $x$ in the spot of $CF^0(H_{N+k,t}; \Lambda)$ and with zeroes in other spots. One can directly check it is a closed element in the completed telescope. Moreover, we have the following.

\begin{lemma}\label{l:ig}
	If $x$ is an upper orbit of $H_{N+k,t}$, then $\tilde{x}$ is exact in $\widehat{tel(\mathcal{C}_\Lambda)}$.
\end{lemma}
\begin{proof}
	By Lemma \ref{l:upper}, $h^{k(k+1)}_T(x)$ can only be a linear combination of upper orbits of $CF^0(H_{N+k+1,t}; \Lambda)$ since continuation maps do not decrease the action.
	
	Then consider the element 
	$$
	\hat{x}:= (0,0, \cdots, 0, x, 0, h^{k(k+1)}_T(x), 0, h^{(k+1)(k+2)}_T\circ h^{k(k+1)}_T(x), \cdots).
	$$
	More precisely, on the spot of $CF^0(H_{N+k,t}; \Lambda)[1]$ it is $x$, and on the spot of $CF^0(H_{N+k+l,t}; \Lambda)[1]$ it is $h^{(k+l-1)(k+l)}_T\circ\cdots\circ h^{k(k+1)}_T(x)$ for $l\geq 1$. On other spots it is zero. By estimate $(2)$ in Lemma \ref{l:upper}, the valuation of $h^{(k+l-1)(k+l)}_T\circ\cdots\circ h^{k(k+1)}_T(x)$ goes to infinity as $l$ goes to infinity, hence $\hat{x}$ is a well-defined element in $\widehat{tel(\mathcal{C}_\Lambda)}$. The definition of the telescope differential $\delta$ shows that $\delta(-\hat{x})= \tilde{x}$. Moreover, the valuation of $-\hat{x}$ is the same as $\tilde{x}$.
\end{proof}

The above Lemma is called the process of \textit{ignoring upper orbits}. Next we discuss how heaviness can be studied by the above telescope.

Suppose that $K$ is not heavy, then there is a function $\tilde{f}$ such that $\tilde{f}\leq 0$ on $K$ and $\mu(1, \tilde{f})>0$, see Remark \ref{r:def} $(3)$. Then we have some $k>0$ such that $c(1, k\tilde{f})> 2(1+C)$ where $C$ is the constant in Lemma \ref{l:upper}. Pick a small positive number $\sigma$ and define $f:=k\tilde{f}- 2\sigma$ such that $f< -\sigma$ on $K$ and $c(1, f)> 1+C$. We perturb $f$ to get a non-degenerate $f_t$ which also satisfies that $f_t< -\sigma$ on $K$ and $c(1, f_t)> 1+C$.

The functions $H_{N+k, t}$ converge to zero on $K$ and diverge to positive infinity outside $K$. There is some $N'$ such that $H_{N+N', t}\geq f_t$. We define $G_{n,t}:= H_{N+N'+n, t}$ for notation simplicity, see Figure \ref{Ham}. Then the completion of the telescope
$$
\mathcal{G}: CF(G_{1,t}; \Lambda)\to CF(G_{2,t}; \Lambda)\to \cdots 
$$
computes $SH_M(K; \Lambda)$. On the other hand, we have the following telescope
$$
\mathcal{I}_f: CF(f_t; \Lambda)\to CF(f_t; \Lambda)\to \cdots
$$
where the continuation maps are identities. Pick a closed element $x\in CF^0(f_t; \Lambda)$ and a sequence $\{a_n\}\in \Lambda, v(a_n)\to +\infty$. We have two well-defined closed elements in the complete telescope
$$
(\sum_n a_n x, 0,0,0,\cdots), \quad (a_1 x,0,a_2 x,0,\cdots).
$$
By using the telescope differential, we can check they are homologous. Consider 
$$
\hat{x}:=(0,\sum_{n\geq 2} a_n x,0,\sum_{n\geq 3} a_n x,0,\cdots)
$$
which is $\sum_{n\geq k+1} a_n x$ on the spot of the $k$-th $CF^0(f_t; \Lambda)[1]$ and is zero otherwise. Then we have
$$
\delta(\hat{x})= (\sum_n a_n x, 0,0,0,\cdots)- (a_1 x,0,a_2 x,0,\cdots).
$$

Next we discuss the relation between PSS maps and the restriction map. Pick a negative Morse function $g$ on $M$ satisfying that
\begin{enumerate}
	\item It has small first and second derivative such that, viewed as a Hamiltonian, it has only constant one-periodic orbits.
	\item It has a unique index zero critical point $\tau$. 
\end{enumerate}
Then consider $g_n:= a_n g+ b_n$ for some $a_n>0, b_n\in\mathbb{R}$. If we choose $a_n, b_n$ carefully we can achieve that
\begin{enumerate}
	\item $g_n$ satisfies the above two properties for $g$.
	\item $g_{n}\leq g_{n+1}$ for all $n$.
	\item $g_n\leq G_{n,t}$ for all $n$.
	\item $g_n$ converges to zero on $M$.
\end{enumerate}
We write the Morse complex of $g_n$ as $CM(g_n)$, and the Hamiltonian complex of $g_n$ as $CF(g_n), CF(g_n; \Lambda)$. The unique index zero critical point of $g_n$ is written as $\tau_n$. Hence $\tau_n$ is a closed element in $CM^0(g_n)$ and represents $1\in H(M)$. Pick linear monotone homotopies connecting $g_n, g_{n+1}$ and write
$$
\mathcal{I}_g: CF(g_1; \Lambda)\to CF(g_2; \Lambda)\to \cdots
$$
as the induced telescope. Similar to the case of $\mathcal{I}_f$, pick a sequence $\{a_n\}\in \Lambda, v(a_n)\to +\infty$, the element 
$$
(a_1\tau_1,0,a_2\tau_2,0,\cdots)\in\widehat{tel(\mathcal{I}_g)}
$$
is homologous to an element $(b\tau_1,0,0,0,\cdots)$ for some $b\in\Lambda$. This is because each $CF^0(g_n; \Lambda)$ has a unique generator, the continuation maps are identity maps weighted by the action differences.

Then we have three collections of maps: PSS maps
$$
PSS^{gf}_n: CM^0(g_n)\to CF^0(f_t), \quad PSS_{g_n}: CM^0(g_n)\to CF^0(g_n)
$$
and the continuation map
$$
h^{gf,n}: CF^0(g_n)\to CF^0(f_t).
$$
By the compatibility of PSS maps and continuation maps, $h^{gf,n}\circ PSS_{g_n}$ equals $PSS^{gf}_n$ in the homology level. Moreover, since $g_n$ has a unique index zero critical point, the map $PSS_{g_n}$ is identity in the chain level. So we get
\begin{lemma}\label{l:ho}
	$h^{gf,n}(\tau_n)$ is homologous to $PSS^{gf}_n(\tau_n)$ in $CF^0(f_t)$.
\end{lemma}

Recall that all $g_n$'s are negative and converge to zero on $M$, we have $H(\widehat{tel(\mathcal{I}_g)}) = SH_M(M; \Lambda)$. The collection of weighted continuation maps
$$
h^{gG,n}_T: CF(g_n; \Lambda)\to CF(G_{n,t}; \Lambda),
$$
together with suitable homotopies, induces a chain map 
$$
h^{gG}_T: \widehat{tel(\mathcal{I}_g)}\to \widehat{tel(\mathcal{G})}.
$$
In the homology level, it is the restriction map $r: SH_M(M; \Lambda)\to SH_M(K; \Lambda)$. Similarly, we have two other continuation maps
$$
h^{gf,n}_T: CF^0(g_n; \Lambda)\to CF^0(f_t; \Lambda), \quad h^{fG,n}_T: CF^0(f_t; \Lambda)\to CF^0(G_{n,t}; \Lambda)
$$
which also induce chain maps between corresponding completed telescopes.

\begin{lemma}\label{l:gluing}
	The three maps $h^{gG,n}_T, h^{gf,n}_T,h^{fG,n}_T$ induce well-defined chain maps $h^{gG}_T, h^{gf}_T,h^{fG}_T$ between completed telescopes. In the homology level we have that
	$$
	h^{gG}_T= h^{fG}_T\circ h^{gf}_T.
	$$
\end{lemma}
\begin{proof}
	The maps $h^{gG,n}_T,h^{fG,n}_T$ do not decrease valuation since $g_n\leq G_{n,t}, f_t\leq G_{n,t}$. The map $h^{gf,n}_T$ possibly decreases the valuation by a universal bounded amount since $f_t$ does not depend on $n$ and $g_n$ is uniformly bounded in $n$. Hence they all induce well-defined maps between corresponding completed telescopes. They are chain maps and $h^{gG}_T= h^{fG}_T\circ h^{gf}_T$ in the homology level follows from a gluing argument, similar to the proof of the functoriality of the restriction maps, see $(1)$ in Theorem \ref{t:property}.
\end{proof}

A direct corollary of this lemma is that if $h^{gG}_T=r\neq 0$ then $h^{fG}_T\neq 0$ in degree zero in the homology level. Now we use it to prove Theorem \ref{main}.

\begin{theorem}\label{t:mainproof}
	Let $(M, \omega)$ be a symplectically aspherical manifold and let $K$ be an index bounded domain. If $SH_M(K; \Lambda)$ is non-zero then $K$ is heavy.
\end{theorem}
\begin{proof}
	Suppose that $K$ is not heavy, we have the above functions $f_t, g_n, G_{n,t}$. 
	
	Let $y=(y_1,y_1',y_2,y_2',\cdots)$ be a closed degree zero element in $\widehat{tel(\mathcal{I}_g)}$, then it is of the form
	$$
	(a_1\tau_1,0,a_2\tau_2,0,\cdots)\in\widehat{tel(\mathcal{I}_g)}
	$$
	with $v(a_n)$ going to infinity. By the above discussion, it is homologous to $x=(b\tau_1,0,0,0,\cdots)$ for some $b\in\Lambda$.
	
	Since $c(1, f_t)>1+C$, there is a closed element $\gamma\in CF^0(f_t)$ which represents $PSS^{gf}_1(\tau_1)$ with action greater than $1+C$. By Lemma \ref{l:ho}, we have that $\gamma$ is homologous to $h^{gf,1}(\tau_1)$ in $CF^0(f_t)$. Hence $J(\gamma)$ is homologous to $J\circ h^{gf,1}(\tau_1)=h^{gf,1}_T\circ J(\tau_1)$ in $CF^0(f_t; \Lambda)$, see (\ref{e:change}). This implies that
	$$
	(J(\gamma),0,0,0,\cdots) \quad \text{and} \quad (h^{gf,1}_T (J(\tau_1)),0,0,0,\cdots)= (T^{\mathcal{A}_{g_1}(\tau_1)}h^{gf,1}_T(\tau_1),0,0,0,\cdots)
	$$
	are homologous in $\widehat{tel(\mathcal{I}_f)}$. On the other hand, 
	$$
	h^{gf}_T(x)= (bh^{gf,1}_T(\tau_1),0,0,0,\cdots).
	$$
	Hence $b^{-1}T^{\mathcal{A}_{g_1}(\tau_1)}h^{gf}_T(x)$ is homologous to $(J(\gamma),0,0,0,\cdots)$ in $\widehat{tel(\mathcal{I}_f)}$.

	Note that 
	$$
	h^{fG}_T((J(\gamma),0,0,0,\cdots))=(h^{fG,1}_T(J(\gamma)),0,0,0,\cdots).
	$$
	However, since the action of $\gamma$ is larger than $1+C$ and $h^{fG,1}$ does not decrease valuation, the geometric underlying orbits of $h^{fG,1}_T(\gamma)$ is a linear combination of upper orbits of $G_{1,t}$. By Lemma \ref{l:ig}, it is exact in $\widehat{tel(\mathcal{G})}$. Finally we get the following relations in the chain level
	$$
	\begin{aligned}
		r(y)&\sim r(x)\\
		&\sim h^{fG}_T\circ h^{gf}_T(x)\\
		&\sim h^{fG}_T(bT^{-\mathcal{A}_{g_1}(\tau_1)}((J(\gamma),0,0,0,\cdots)))\\
		&= bT^{-\mathcal{A}_{g_1}(\tau_1)}(h^{fG,1}_T(J(\gamma)),0,0,0,\cdots)
	\end{aligned}
	$$
	where $\sim$ means the homologous relation. It shows that for any closed degree zero element $y$ in $\widehat{tel(\mathcal{I}_g)}$, its image $r(y)$ under the restriction map is homologous to an exact element in $\widehat{tel(\mathcal{G})}$. Therefore $r$ is zero in the homology level, a contradiction to $SH_M(K; \Lambda)\neq 0$.

\end{proof}

\section{Applications}

First, we have an application on the heaviness of the complement of certain index bounded domains. Let $(M, \omega)$ be a symplectically aspherical manifold and let $K$ be an index bounded domain. Define $N:=\overline{M-K}$, which is also a compact domain with boundary $\partial N=\partial K$. In this case the local Liouville vector points inward along $\partial N$.

Fix a neighborhood of $\partial N$ in $M$ that is symplectomorphic to $([1-\delta, 1+\delta]\times \partial N, d(r\alpha))$. Here $r$ is the coordinate on $[1-\delta, 1+\delta]$, the vector field $\partial_r$ points inward along $\partial N$, and $\partial N$ is identified with $\{1\}\times \partial N$. Consider a Hamiltonian function $H_{-\lambda}$ on $M$ which equals $f(r)=-\lambda r$ on $[1-\delta, 1+\delta]\times \partial N$, with some negative slope $-\lambda$. For any integer $k$ we set 
$$
C_{\lambda, k}':=\sup_{\gamma'}\lbrace \abs{\int_{w_\gamma'} \omega}\rbrace.
$$
Here $\gamma'$ runs over all one-periodic orbits of $H_{-\lambda}$ which is contained in $[1-\delta, 1+\delta]\times \partial N$, contractible in $M$ and with $CZ(\gamma')=k$, and $w_{\gamma'}$ is a spanning disk of $\gamma'$. Then we observe that

\begin{lemma}\label{l:comp}
	If $(K, \partial K, \alpha)$ is an index bounded domain and $\pi_{1}(\partial K)\to \pi_{1}(M)$ is injective, then 
	$$
	\sup_{\lambda \geq 0} \lbrace C_{\lambda, k}' \rbrace< +\infty
	$$
	for every integer $k$.
\end{lemma}
\begin{proof}
	Since $\pi_{1}(\partial K)\to \pi_{1}(M)$ is injective, for any $\gamma'$ as above we can compute $\int_{w_\gamma'} \omega$ and the index of $\gamma'$ with respect to a spanning disk in $\partial K$. Any such $\gamma'$ is the reverse of some one-periodic orbit $\gamma$ of $H_{\lambda}=\lambda r, \lambda>0$. The indices of $\gamma, \gamma'$ are related by a minus sign plus a universal bounded error. Hence the estimate of $C_{\lambda, k}'$ follows from the index bounded condition (\ref{ib}) for $C_{\lambda, k}$.
\end{proof}

Then we can repeat all the arguments in the previous section for $N$.

\begin{theorem}\label{t:comp}
	Let $(M, \omega)$ be a symplectically aspherical manifold and let $K$ be an index bounded domain, with $\pi_{1}(\partial K)\to \pi_{1}(M)$ being injective. If $SH_M(N; \Lambda)\neq 0$ then $N$ is heavy.
\end{theorem}
\begin{proof}
	Similar to $G_{n,t}$ in Theorem \ref{t:mainproof}, we construct Hamiltonians $G_{n,t}'$ to compute $SH_M(K; \Lambda)$. They will have negative slopes with respect to $r$ in the neck region. They satisfy an index-action relation as in Lemma \ref{l:comp}. Then Lemma \ref{l:upper} and Lemma \ref{l:ig} work in the same way. The construction of $g_n$ is unchanged. 
\end{proof}

The following interpolation theorem \cite{M2020, TVar} between index bounded domains brings us more applications. 

\begin{theorem}
	[Proposition 1.13 \cite{TVar}] Let $M$ be a closed symplectic manifold with $c_1(TM)=0$, and $K$ be an index bounded domain. For a neck region $\partial K\times [1-\delta, 1+\delta]$ induced by the Liouville flow of the index bounded contact form, the restriction map
	$$
	r: SH_{M}(K\cup (\partial K\times [1-\delta, 1+\delta]); \Lambda)\rightarrow SH_{M}(K; \Lambda)
	$$
	is an isomorphism whenever $\delta >0$ is defined.
\end{theorem}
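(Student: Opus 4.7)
My plan is to adapt the ignoring-upper-orbits machinery from Section 4 to directly compare acceleration data for the two domains $K$ and $K' := K \cup (\partial K \times [1-\delta, 1+\delta])$. The Liouville flow together with the index bounded condition will match up the ``lower orbit'' complexes on both sides up to a canonical identification, from which the restriction map will be forced to be a quasi-isomorphism.

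First I would fix a slightly enlarged Liouville neighborhood $\partial K \times [1-\delta_0, 1+\delta_1]$ with $\delta_0 > 0$ small and $\delta_1 > \delta$. Choose cylindrical acceleration data $\{G_n\}$ for $SH_M(K')$ with $G_n \equiv 0$ on $K'$ and $G_n = f_n(r)$ on $\partial K \times [1+\delta, 1+\delta_1]$ with slopes $\lambda_n \to \infty$, then extended to blow up outside. Simultaneously choose $\{H_n\}$ for $SH_M(K)$ with $H_n \equiv 0$ on $K$ and $H_n = g_n(r)$ on $\partial K \times [1, 1+\delta_1]$ with slopes $\mu_n \to \infty$, arranged so that $G_n \leq H_n$ pointwise and so that $g_n$ matches $f_n$ outside a compact piece of the enlarged neck. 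A comparison ladder between the two one-rays $\mathcal{C}^G \to \mathcal{C}^H$ then realizes $r:SH_M(K')\to SH_M(K)$ at the chain level.

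Next I would apply the reduction of Section 4 on both sides. One-periodic orbits of $H_n$ with Conley--Zehnder index in any fixed bounded window lie, by the index bounded condition, either strictly inside $K$ (constant orbits) or at levels $\{r = c_{n,\beta}\}$ where $c_{n,\beta}\mu_n$ realizes the period of a closed Reeb orbit $\beta$ on $\partial K$; as $\mu_n\to\infty$ these values accumulate at $r=1$. Symmetrically, the bounded-index orbits of $G_n$ accumulate at $r=1+\delta$, and the Liouville flow identifies the underlying Reeb orbits. Combined with the action estimate in Lemma \ref{action}, the quasi-isomorphism \ref{q2} produces lower-orbit reductions of $\widehat{tel}(\mathcal{C}^H)$ and $\widehat{tel}(\mathcal{C}^G)$ that, at the Novikov-linear level, are both supported on the same finite set of closed Reeb orbits on $\partial K$ (plus constant orbits in the interior of $K$).

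The remaining and main technical point is to verify that the restriction continuation map $\mathcal{C}^G\to\mathcal{C}^H$ descends to a quasi-isomorphism between these reduced lower-orbit complexes. I expect this to follow from a cofinality argument: after passing to a subsequence and performing a Liouville rescaling in $r$, the lower orbits of $\{G_n\}$ can be identified with those of a shifted family in $\{H_{n'}\}$, and the induced continuation map takes the form $I+P$ with $P$ of strictly positive filtration, exactly as in the finite-dimensionality discussion preceding Lemma \ref{bound}. The delicate part is bookkeeping the Novikov action shift introduced by the Liouville rescaling on $\partial K \times [1,1+\delta]$ and checking that it is absorbed after passing to $\varinjlim_p\varprojlim_q$, so that on the associated graded pieces the comparison becomes a filtration-preserving isomorphism; this is where the integrality of $[\tilde\omega]$ and the finite-dimensionality per degree (both consequences of the index bounded hypothesis) will be used in the same way as in the proof of Theorem \ref{res2}.
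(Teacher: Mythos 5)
First, a point of orientation: the paper does not actually prove this statement. It is quoted as Proposition 1.13 of \cite{TVar}, with only the remark that the argument carries over from the uncapped to the capped version. So there is no in-paper proof to match your proposal against; it has to stand on its own. Its general flavor --- use the index bound to confine the bounded-index orbits and their actions, reduce both sides to lower-orbit complexes, then compare --- is indeed the flavor of the argument in \cite{TVar} and \cite{M2020}.

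The gap is at what you yourself flag as the ``main technical point,'' which is asserted rather than proved, and the two mechanisms you invoke for it do not work as stated. (i) Cofinality is unavailable: writing $K'=K\cup(\partial K\times[1-\delta,1+\delta])$, every acceleration Hamiltonian $H_n$ for $K$ is large and positive on the outer neck where every $G_m$ for $K'$ is near zero; hence $H_n\le G_m$ fails for all $m$ once $n$ is large, there is no monotone homotopy going back from the $K$-ray to the $K'$-ray, and the two rays cannot be interleaved. (ii) The $I+P$ structure from Section 4 is specific to the situation where consecutive Hamiltonians have \emph{literally identical} lower parts, so that the diagonal $I$ comes from the constant homotopy; here the lower parts of $G_n$ and $H_n$ differ on the entire neck (one vanishes there, the other is linear of large slope), their bounded-index generators sit at different radii ($r\approx 1+\delta$ versus $r\approx 1$), and the $K'$-side has constant orbits in the neck with no counterpart on the $K$-side. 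So the two reduced complexes are not ``supported on the same finite set'' of generators, there is no natural common basis in which the continuation map is $I+P$, and the Liouville rescaling you propose to create one is defined only on the neck (not on $M$) and rescales the action, so it does not identify the two global Floer complexes. What is actually needed --- and what \cite{TVar} and \cite{M2020} supply --- is a genuine Floer-theoretic argument that the continuation map between the action-truncated, degree-bounded complexes is a quasi-isomorphism: for instance a deformation sliding the kink of the Hamiltonian from $r=1+\delta$ down to $r=1$, with the index bound guaranteeing that no relevant orbit's action crosses the boundary of the window $[p,q]$ during the deformation, or equivalently the construction of a homotopy inverse after an action shift that is then absorbed in $\varinjlim_p\varprojlim_q$. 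Your proposal correctly identifies where integrality and degreewise finite-dimensionality enter, but without this step it shows only that both sides reduce to finite-dimensional lower-orbit complexes, not that the restriction map between them is an isomorphism.
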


This interpolation theorem allows us to relate the SH-heaviness with heaviness.

\begin{lemma}\label{l:SH-heavy}
	Let $(M, \omega)$ be a closed symplectic manifold with $c_1(TM)=0$ and $K$ being an index bounded domain, if $K$ is SH-heavy then $SH_{M}(K)\neq 0$.
\end{lemma}
\begin{proof}
	Define $K_{1+\delta}:= K\cup (\partial K\times [1-\delta, 1+\delta])$.	The tubular neighborhood $[1-\delta, 1+\delta]\times \partial K$ of $\partial K$ gives a sequence of open sets $K^{\mathrm{o}}_{1+\delta}$ as the interior of $K_{1+\delta}$, parameterized by $\delta$. Then the quantum measure $\tau(K)$ can be computed as
	$$
	\tau(K)= \bigcap_{\delta >0}\ker(r: QH(M; \Lambda)\rightarrow SH_{M}(M-K^{\mathrm{o}}_{1+\delta}; \Lambda)).
	$$
	
	Suppose that $K$ is SH-heavy, which means that $\tau(K)\neq 0$, we will show that $SH_{M}(K; \Lambda)\neq 0$. By the interpolation theorem, we know that 
	$$
	SH_{M}(K; \Lambda)\cong SH_{M}(K_{1+\delta'}; \Lambda), \quad \forall -\delta < \delta'< \delta.
	$$
	Pick $0< \delta_{1} <\delta_{2}<\delta$ and suppose that, on the contrary, $SH_{M}(K; \Lambda)=0$. Then we have that $SH_{M}(K_{1+\delta_{2}}; \Lambda)=0$. By the Mayer-Vietoris property for relative symplectic cohomology, we have that $r:QH(M; \Lambda)\to SH_{M}(M-K^{\mathrm{o}}_{1+\delta_{1}}; \Lambda)$ is an isomorphism, which means that $\tau(K)=0$, contradiction.
\end{proof}

Combining this lemma with Theorem \ref{t:mainproof} we get the following corollary.

\begin{corollary}\label{c:main}
	For $(M, \omega)$ being symplectically aspherical and $K$ being an index bounded domain, if $K$ is SH-heavy then $K$ is heavy.
\end{corollary}
\begin{proof}
	If $K$ is SH-heavy then $SH_{M}(K; \Lambda)\neq 0$ by the above lemma. Applying Theorem \ref{t:mainproof} we complete the proof.
\end{proof}

Similar to the proof in Lemma \ref{l:SH-heavy}, we can prove that

\begin{corollary}\label{c:comp}
	For $(M, \omega)$ being symplectically aspherical and $K$ being an index bounded domain with $\pi_{1}(\partial K)\to \pi_{1}(M)$ being injective, then $\overline{M-K}$ is heavy. Particularly, $K$ is not super-heavy.
\end{corollary}
\begin{proof}
	We use the notation in Lemma \ref{l:SH-heavy}. Apply Theorem \ref{t:SH} to $K_{1+\delta}$, the volume class of $M$ is in 
	$$
	\ker(H(M; \Lambda)\to H(K_{1+\delta}; \Lambda))\subset \ker(r:SH_M(M; \Lambda)\to SH_M(K_{1+\delta}; \Lambda)).
	$$
	Then apply the Mayer-Vietoris sequence to the pair of $K_{1+\delta}$ and $\overline{M-K}$, we get that $SH_M(\overline{M-K}; \Lambda)\neq 0$. 
	
	By the interpolation invariance, we can also show $\overline{M-K_{1+\delta}}$ is heavy. This shows that $K$ is not super-heavy, since a heavy set intersects any super-heavy set.
\end{proof}

Next we move to a family of examples, which are the skeleta of symplectically aspherical manifolds, relative to simple crossings symplectic divisors. We refer the readers to \cite{FTMZ, M2020} for details on the theory of simple crossings symplectic divisors.

\begin{definition}
	[Definition 2.1 \cite{FTMZ}] Let $(M, \omega)$ be a closed symplectic manifold. A simple crossings symplectic divisor in $(M, \omega)$ is a finite transverse collection of $\lbrace V_{i}\rbrace_{i\in S}$ of closed submanifolds of $M$ of codimension 2, such that $V_{I}$ is a symplectic submanifold of $(M, \omega)$ for any $I\subset S$ and the intersection and $\omega$-orientations of $V_{I}$ agree.
\end{definition}

\begin{definition}[Definition 1.19 \cite{TVar}]\label{divsor}
	 A Giroux divisor $V=\cup_{i\in S} V_{i}$ is a simple crossings symplectic divisor in $(M, \omega)$ such that there exist integers $w_{i}>0$, a real number $c>0$ and
	$$
	\sum_{i} w_{i}PD[V_{i}] =c[\omega] \in H^{2}(M).
	$$
\end{definition}

Below is a structural result about complements of Giroux divisors.

\begin{proposition}[Proposition 1.20 \cite{TVar}]\label{str}
	 Let $V$ be a Giroux divisor in a symplectic manifold $(M, \omega)$ with $c_1(TM)=0$. Then there exists a Liouville domain $W\subset M-V$ such that
	\begin{enumerate}
		\item The closure of $M-W$ is stably displaceable.
		\item The closure of $M-W$ deformation retracts to $V$.
		\item $W$ is an index bounded domain.
	\end{enumerate}
\end{proposition}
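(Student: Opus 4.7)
The plan is to construct $W$ as the complement of a carefully shaped neighborhood of $V$ built from the Giroux divisor data, and then verify the three items in turn. First I would produce a Liouville primitive on $M-V$ from the identity $\sum_{i} w_{i} PD[V_{i}] = c[\omega]$. Choose Hermitian metrics and connections on line bundles $\mathcal{L}_{i} \to M$ with $c_{1}(\mathcal{L}_{i}) = PD[V_{i}]$, and sections $s_{i}$ cutting out $V_{i}$ transversely; the functions $\rho_{i} := -\log|s_{i}|^{2}$ are smooth on $M - V_{i}$ and blow up along $V_{i}$. Working with an $\omega$-compatible almost complex structure that is integrable near $V$ and for which the $s_{i}$ are holomorphic, the 1-form $\theta = -c^{-1}\sum_{i} w_{i}\, d^{c}\rho_{i}$, adjusted by an exact correction that accounts for the difference between $c\omega$ and the Chern curvature representative $\sum w_{i}\Theta_{i}$, satisfies $d\theta = \omega$ on $M-V$. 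Define $W$ as a sublevel set of the form $\{\min_{i} \rho_{i} \le R\}$ for large $R$, with the corners at the crossing strata smoothed inside the standard $T^{|I|}$-equivariant polydisk model around each $V_{I}$. This makes $W$ a Liouville domain and its Liouville vector field points outward along $\partial W$.

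Item (2) is then essentially tautological: by construction $\overline{M-W}$ is a tubular neighborhood of $V$ assembled from normal disk bundles of the $V_{i}$, and radial rescaling in each normal direction gives an explicit deformation retract onto $V$. For item (1), in the local $T^{|I|}$-symmetric chart around $V_{I}$ the closure of $M-W$ looks like a product of small disks in the normal directions with (an open piece of) $V_{I}$; rotation in any one of the $S^{1}$-factors of $T^{|I|}$ displaces the corresponding disk factor, and stabilizing by $T^{*}S^{1}$ and composing with a translation along the cotangent direction turns these local rotations into a global Hamiltonian displacement of $\overline{M-W}\times S^{1}$ inside $M \times T^{*}S^{1}$, which is the stable displaceability required.

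The main obstacle is (3), the index bounded property of $\theta|_{\partial W}$. Away from the crossings, $\partial W$ locally fibers as the unit circle bundle of $\mathcal{L}_{i}$ over an open piece of $V_{i}$, and the Reeb flow is the fiber rotation with period proportional to $w_{i}$, so closed orbits are iterated fibers with period and Conley-Zehnder index both linear in the iteration. The delicate part is the behavior at a codimension-$k$ stratum $V_{I}$, where the local model identifies $\partial W$ as (an open subset of) a $T^{k}$-bundle over $V_{I}$ and the Reeb vector field is a linear combination of the $T^{k}$-generators whose coefficients are prescribed by the integer weights $\{w_{i}\}_{i\in I}$. A direct Morse-Bott Conley-Zehnder computation, in the spirit of McLean's analysis of links of normal crossings divisors, shows that an orbit arising from a multi-iterate $(m_{i})_{i\in I}$ has period on the order of $\sum m_{i} w_{i}$ and index growing linearly in $\sum m_{i}$; the positivity and integrality of the weights then rule out orbits of bounded Conley-Zehnder index with arbitrarily large action, and a small non-degenerating perturbation inside the Morse-Bott families upgrades this to the non-degenerate index bound (\ref{ib}).
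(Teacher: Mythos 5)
The first thing to say is that the paper does not prove this proposition: it is imported verbatim as Proposition 1.20 of \cite{TVar}, so there is no internal argument to compare yours against, and the construction you sketch is essentially the one carried out in \cite{TVar}, resting on McLean's analysis \cite{M2020} of neighborhoods of simple crossings divisors. Your treatment of items (1) and (2) --- the primitive $-c^{-1}\sum_i w_i\,d^c\log|s_i|^2$ corrected by a primitive of the difference between $c\omega$ and the curvature representative, the $T^{|I|}$-polydisk models at the strata, the radial retraction, and displacement after stabilizing by $T^*S^1$ --- matches that source in substance. One soft spot: the step where ``local rotations'' in the normal disk factors are assembled into a single Hamiltonian isotopy of $\overline{M-W}\times S^1$ inside $M\times T^*S^1$ is not automatic, since the fiber rotations are only defined near $V$ and do not extend globally; the standard route is to first push $\overline{M-W}$ into an arbitrarily small neighborhood of $V$ (e.g.\ by the Liouville flow on $W$) and then apply the stable displaceability criterion for such neighborhoods, rather than trying to globalize the rotations directly.

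The one genuine gap is in item (3). You attribute the exclusion of bounded-index orbits of large action to ``the positivity and integrality of the weights,'' but the weights $w_i$ control only the \emph{periods} of the torus orbits near a stratum $V_I$ (hence the quantities $|\int_\gamma r\alpha|$ in (\ref{ib})), not their Conley--Zehnder indices. In McLean's index formula the $(m_i)_{i\in I}$-iterated orbit has index, schematically, $2\sum_{i\in I} m_i(1+a_i)$ plus uniformly bounded Morse--Bott corrections, where the $a_i$ are discrepancy-type integers read off from a trivialization of $\Lambda^n_{\mathbb{C}}T^*M$ near $V$. It is precisely the Calabi--Yau hypothesis $c_1(TM)=0$ that (a) makes the Conley--Zehnder index of a contractible orbit well defined independently of capping and (b) allows one to choose the trivialization so that the coefficients $1+a_i$ are uniformly positive, whence bounded index forces bounded $\sum_i m_i$, hence bounded period and the estimate (\ref{ib}). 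The index bound genuinely fails for divisor complements in non--Calabi--Yau manifolds, so your argument must invoke this hypothesis at exactly this point; as written, it never does.
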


\begin{definition}
	Let $V$ be a Giroux divisor in a closed symplectically aspherical manifold $(M, \omega)$. Given a Liouville domain $W$ satisfying Proposition \ref{str}. The image $W_{t}$ of $W$ under the time-$t$ reverse Liouville flow is a Liouville subdomain of $W$. The skeleton $L_{W}$ of $W$ is defined as $L_{W}=\cap_{t\in \mathbb{R}_{\geq 0}}W_{t}$. It is a compact subset of $M$ where the symplectic form vanishes.
\end{definition}

As the definition indicates, for a given Giroux divisor, there may be different Liouville domains satisfying Proposition \ref{str}. Hence we might have skeleta which are set-theoretically different. But they all have some symplectic rigidity properties.

\begin{theorem}
	[Theorem 1.24 \cite{TVar}] For any $t\in \mathbb{R}_{\geq 0}$, there is an isomorphism
	$$
	QH(M; \Lambda)\rightarrow SH_{M}(W_{t}; \Lambda).
	$$
\end{theorem}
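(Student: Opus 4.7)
The strategy is to combine the interpolation theorem (Proposition 1.13 of \cite{TVar}, restated above) with the Mayer-Vietoris property of relative symplectic cohomology, together with the fact that stably displaceable sets have vanishing $SH_M$. The plan is first to establish the isomorphism at $t=0$, then propagate it to all $t\geq 0$ by interpolation, and finally to extract SH-fullness from the fact that the restriction $QH(M)\to SH_M(W_t)$ is a \emph{unital} isomorphism.

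For the isomorphism $QH(M)\cong SH_M(W)$ at $t=0$, I would apply Varolgunes' Mayer--Vietoris to the pair $(W,\overline{M-W})$: their union is $M$, their intersection is the contact hypersurface $\partial W$, and the cylindrical structure near $\partial W$ from the index bounded contact form provides Hamiltonians realizing the required Poisson-commutativity. Proposition \ref{str}(1) gives that $\overline{M-W}$ is stably displaceable, hence $SH_M(\overline{M-W})=0$, and the same vanishing applies to $\partial W\subset\overline{M-W}$ by restriction. The Mayer-Vietoris long exact sequence then collapses to $SH_M(M)\cong SH_M(W)$, and unitality of the restriction identifies this as the desired ring map $QH(M)\to SH_M(W)$. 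To pass to arbitrary $t\geq 0$, I would iterate the interpolation theorem along a sequence $0=t_0<t_1<\cdots<t_n=t$ of small steps: each $\partial W_{t_i}$ remains index bounded (the condition is preserved under scaling the Liouville form), so consecutive restrictions $SH_M(W_{t_i})\to SH_M(W_{t_{i+1}})$ are isomorphisms, and their composition with the $t=0$ result yields the stated isomorphism.

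For SH-fullness, let $C\subset M-L_W$ be compact. Since $L_W=\bigcap_{t\geq 0}W_t$ is a nested intersection of compact sets and $C$ is disjoint from $L_W$, compactness produces $t>0$ with $C\subset\overline{M-W_t}$. The key point is that because $QH(M)\to SH_M(W_t)$ is a unital isomorphism, all of $QH(M)$ is already ``detected'' by $W_t$. Running the Mayer-Vietoris sequence for the pair $(W_t,\overline{M-W_t})$ and combining the isomorphism on the $W_t$ side with the vanishing $SH_M(\partial W_t)=0$ (since $\partial W_t$ lies inside a stably displaceable enlargement of $\overline{M-W}$ obtained by flowing out by the Liouville vector field) forces $SH_M(\overline{M-W_t})=0$. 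Unitality of restriction then gives $SH_M(C)=0$, completing the proof that $L_W$ is SH-full.

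The main obstacle I anticipate is the technical verification of stable displaceability for $\overline{M-W_t}$ with $t>0$: Proposition \ref{str} only supplies this for $t=0$, and the enlargement by a collar must be handled either by extending the displacing isotopy through the collar using the Liouville flow, or equivalently by transporting the displacement via the Liouville scaling that carries $\overline{M-W_t}$ into a small neighborhood of $\overline{M-W}$. A secondary concern is matching the capped formalism of $SH_M$ used here with the precise statement of Varolgunes' Mayer-Vietoris (and verifying the Poisson-commuting barrier condition at $\partial W_t$ for all $t$); both are addressed by the framework of \cite{BSV, Var2021}, but the argument must cite the correct formulation.
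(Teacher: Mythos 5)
Your first half --- the isomorphism $QH(M)\cong SH_M(W_t)$ --- is correct and follows the standard route: Proposition \ref{str}(1) gives stable displaceability, hence SH-invisibility, of $\overline{M-W}$; invisibility passes to the compact subset $\partial W$ by unitality of the restriction map ($r(e_{K'})=e_K$ and $SH_M(K)=0$ iff $e_K=0$); Mayer--Vietoris for $(W,\overline{M-W})$ then collapses to $SH_M(M)\cong SH_M(W)$; and the interpolation theorem, iterated along the Liouville flow (which preserves index-boundedness since it conjugates Reeb dynamics, rescaling periods and fixing Conley--Zehnder indices), propagates this to every $W_t$. The reduction of SH-fullness to the vanishing of $SH_M(\overline{M-W_t})$ via the finite intersection property is also fine.

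The gap is in your justification that $SH_M(\partial W_t)=0$ (equivalently, that $SH_M(\overline{M-W_t})=0$) for $t>0$. For $t>0$ the hypersurface $\partial W_t$ lies in the \emph{interior} of $W$, so it is not contained in the stably displaceable set supplied by Proposition \ref{str}, and your proposed ``stably displaceable enlargement'' does not exist for free: displaceability is not inherited by enlargements, and the openness-of-displacement argument only covers sets $C^0$-close to $\overline{M-W}$, whereas in the fullness argument the parameter $t$ is dictated by the compact set $C$ and cannot be taken small. Without $SH_M(\partial W_t)=0$ your second Mayer--Vietoris argument does not close (the dimension count over the Novikov field is fine, but only once the intersection term vanishes). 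The correct tool is one you already quoted: the interpolation theorem is stated for \emph{concave or convex} domains, so apply it to the concave domain $\overline{M-W}$, whose boundary $\partial W$ carries the same index bounded contact form. Since $\overline{M-W_t}$ is exactly $\overline{M-W}$ with the collar $\partial W\times[e^{-t},1]$ (the trace of the Liouville flow) attached, the interpolation theorem gives $SH_M(\overline{M-W_t})\cong SH_M(\overline{M-W})=0$ directly for every $t\geq 0$; restricting to $C\subset \overline{M-W_t}$ then yields $SH_M(C)=0$ and SH-fullness, with no second Mayer--Vietoris needed.
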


Note that $W_t$ is an index bounded domain since it is the image of $W$ under the reverse Liouville flow. Then this theorem combined with Theorem \ref{t:mainproof} gives that
\begin{corollary}
	All $W_{t}$'s are heavy.
\end{corollary}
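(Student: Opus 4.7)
The plan is to apply Theorem \ref{res2} directly to each $W_t$. This reduces the corollary to verifying two hypotheses: that $W_t$ is an integral Liouville domain with index bounded boundary, and that the restriction $r\colon QH(M)\to SH_M(W_t)$ sends the unit $e$ to a nonzero class.

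For the first hypothesis, Proposition \ref{str} gives that $W$ itself is a Liouville domain with index bounded boundary, and the discussion following that proposition explains that the Giroux condition makes $W$ integral in the generalized sense of Remark \ref{discrete}. The domain $W_t$ is the image of $W$ under the time-$t$ reverse Liouville flow, so $(\partial W_t,\alpha_t)$ is obtained from $(\partial W,\alpha)$ by rescaling the contact form by $e^{-t}$. Rescaling preserves the set of Reeb orbits up to reparameterization, leaves Conley-Zehnder indices unchanged, and only multiplies periods (and hence the action bounds $C_k$) by $e^{-t}$. Similarly, the class $[\tilde{\omega}]\in H^2(M,W_t)$ corresponding to the lift used for $W$ remains integral in the sense of Remark \ref{discrete}, since $H_2(M,W_t)\cong H_2(M,W)$ under the flow. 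So $W_t$ satisfies the hypotheses on the domain.

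For the second hypothesis, I would invoke Theorem 1.24 of \cite{TVar} quoted just above: the natural map $QH(M)\to SH_M(W_t)$ is an isomorphism for every $t\in\mathbb{R}_{\geq 0}$. Combined with the unitality of the restriction map (Proposition 2.5 \cite{TVar}), this forces $r(e)=e_{W_t}\neq 0$. Both hypotheses of Theorem \ref{res2} are thereby verified, and the conclusion that $W_t$ is heavy follows immediately. Since the argument is uniform in $t$, all $W_t$'s are heavy. There is no substantial obstacle: the only point requiring a moment of thought is that index boundedness and generalized integrality pass from $W$ to each $W_t$, which follows from the scaling behavior of the Liouville flow on the contact form and from the homotopy equivalence $W\simeq W_t$.
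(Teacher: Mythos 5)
Your proposal is correct and follows exactly the route of the paper: verify that each $W_t$ is an index bounded integral Liouville domain, use Theorem 1.24 of \cite{TVar} together with unitality of the restriction map to see $r(e)\neq 0$, and apply Theorem \ref{res2}. The only difference is that you spell out why index boundedness and integrality persist under the reverse Liouville flow, a point the paper asserts without elaboration; your scaling argument for this is sound.
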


\begin{remark}
	The language of stable stems tells that $W_{0}=W$ is heavy since its complement is stably displaceable, see Subsection 1.2 \cite{EP}. Here the interpolation theorem actually shows that all $W_{t}$'s are heavy, which displays the power of the relative symplectic cohomology.
\end{remark}

\begin{corollary}\label{c:skeleta}
	The skeleton $L_{W}$ is heavy.
\end{corollary}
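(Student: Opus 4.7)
The plan is to deduce heaviness of the skeleton $L_W$ from the heaviness of the nested family $\lbrace W_t\rbrace_{t\geq 0}$ established in the previous corollary, via a simple compactness argument based on the fact that $L_W=\bigcap_{t\geq 0}W_t$.

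First, I would recall that heaviness of $W_t$ means, by definition, that
$$
\zeta_e(F)\geq \min_{W_t}F
$$
for every smooth function $F$ on $M$, and that the partial quasi-state $\zeta_e$ is an invariant of $F$ itself, independent of $t$. Since $L_W\subset W_t$ for every $t$, one always has $\min_{W_t}F\leq \min_{L_W}F$; the nontrivial content is that this inequality becomes an equality in the limit.

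Next I would prove the limiting statement: for any continuous $F$ on $M$,
$$
\lim_{t\to +\infty}\min_{W_t}F=\min_{L_W}F.
$$
The upper bound $\min_{W_t}F\leq \min_{L_W}F$ is immediate from $L_W\subset W_t$. For the lower bound, pick $x_t\in W_t$ achieving $\min_{W_t}F$. The sets $W_t$ are compact and nested, so any subsequential limit $x_\infty$ of $\lbrace x_t\rbrace$ lies in $\bigcap_{t\geq 0}W_t=L_W$, and by continuity of $F$ we get $F(x_\infty)=\lim F(x_{t_k})=\liminf_t \min_{W_t}F$. This value is $\geq \min_{L_W}F$, giving the reverse inequality.

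Combining these two ingredients yields, for every smooth $F$,
$$
\zeta_e(F)\geq \lim_{t\to +\infty}\min_{W_t}F=\min_{L_W}F,
$$
which is exactly the heaviness of the compact set $L_W$ with respect to the fundamental class. I do not anticipate a serious obstacle: the entire Floer-theoretic content is already packaged in the previous corollary (each $W_t$ is heavy), and what remains is the elementary nested-compact-sets lemma described above. The only mild subtlety is checking that $\zeta_e$ passes through the limit, but this is automatic because $\zeta_e(F)$ does not depend on $t$ at all — the bound $\zeta_e(F)\geq \min_{W_t}F$ holds simultaneously for every $t$, so one may take the supremum of the right-hand side over $t$ freely.
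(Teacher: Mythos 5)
Your proposal is correct and is essentially the paper's own argument: both reduce heaviness of $L_{W}$ to heaviness of the $W_{t}$'s plus the elementary fact that $\min_{W_{t}}F\to\min_{L_{W}}F$ as the nested compact sets $W_{t}$ shrink to $L_{W}=\bigcap_{t}W_{t}$. The paper phrases this as a proof by contradiction and simply asserts the $C^{0}$-convergence of $W_{t}$ to $L_{W}$, whereas you supply the subsequential-limit justification explicitly, but the content is the same.
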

\begin{proof}
	Suppose that $L_{W}$ is not heavy, then there is a Hamiltonian function $F$ on $M$ with $\mu(1,F)> \max_{L_{W}}F$. Since $L_{W}$ is compact, the sets $W_{t}$ converge to $L_{W}$ in a uniform $C^{0}$-sense, and there exists $t>0$ such that
	$$
	\mu(1,F)> \max_{W_{t}}F\geq \max_{L_{W}}F
	$$
	which contradicts that $W_{t}$ is heavy.
\end{proof}

\bibliographystyle{amsplain}

\end{document}